

\documentclass[11pt]{article}
\usepackage[margin=1in]{geometry}


\usepackage{amsmath} 
\usepackage{amssymb}  
\usepackage{amsthm}

\usepackage{scalefnt}
\usepackage{lscape}  
\usepackage{color}

\makeatletter
\renewcommand*\env@matrix[1][c]{\hskip -\arraycolsep
  \let\@ifnextchar\new@ifnextchar
  \array{*\c@MaxMatrixCols #1}}
\makeatother

\theoremstyle{plain}  
\newtheorem{theorem}{Theorem}[section]
\newtheorem{lemma}[theorem]{Lemma}

\newtheorem{corollary}[theorem]{Corollary}
\newtheorem{definition}[theorem]{Definition}

\theoremstyle{definition}

\theoremstyle{remark} 

\newtheorem{remark}{Remark}[section]




\long\def\aaa#1{{#1}}
\long\def\aaan#1{{#1}}

\title{\LARGE \bf A Complete Characterization of the Gap between \\ Convexity and SOS-Convexity}
 \author{Amir Ali Ahmadi and Pablo A. Parrilo \thanks{The authors are with the Laboratory for Information and
Decision Systems, Department of Electrical Engineering and
Computer Science, Massachusetts Institute of Technology. Email:
\{\texttt{a\_a\_a}, \texttt{parrilo}\}\texttt{@mit.edu}.  }
\thanks{This research was partially supported by the NSF Focused
Research Group Grant on Semidefinite Optimization and Convex
Algebraic Geometry DMS-0757207.} }
\begin{document}
\date{}
\maketitle


\begin{abstract}
Our first contribution in this paper is to prove that three
natural sum of squares (sos) based sufficient conditions for
convexity of polynomials\aaan{,} via the definition of convexity,
its first order characterization, and its second order
characterization\aaan{,} are equivalent. These three equivalent
algebraic conditions, henceforth referred to as sos-convexity, can
be checked by semidefinite programming whereas deciding convexity
is NP-hard. If we denote the set of convex and sos-convex
polynomials in $n$ variables of degree $d$ with $\tilde{C}_{n,d}$
and $\tilde{\Sigma C}_{n,d}$ respectively, then our main
contribution is to prove that $\tilde{C}_{n,d}=\tilde{\Sigma
C}_{n,d}$ if and only if $n=1$ or $d=2$ or $(n,d)=(2,4)$. We also
present a complete characterization for forms (homogeneous
polynomials) except for the case $(n,d)=(3,4)$ which is joint work
with G. Blekherman and is to be published elsewhere. Our result
states that the set $C_{n,d}$ of convex forms in $n$ variables of
degree $d$ equals the set $\Sigma C_{n,d}$ of sos-convex forms if
and only if $n=2$ or $d=2$ or $(n,d)=(3,4)$. To prove these
results, we present in particular explicit examples of polynomials
in $\tilde{C}_{2,6}\setminus\tilde{\Sigma C}_{2,6}$ and
$\tilde{C}_{3,4}\setminus\tilde{\Sigma C}_{3,4}$ and forms in
$C_{3,6}\setminus\Sigma C_{3,6}$ and $C_{4,4}\setminus\Sigma
C_{4,4}$, and a general procedure for constructing forms in
$C_{n,d+2}\setminus\Sigma C_{n,d+2}$ from nonnegative but not sos
forms in $n$ variables and degree $d$.

\noindent Although for disparate reasons, the remarkable outcome
is that convex polynomials (resp. forms) are sos-convex exactly in
cases where nonnegative polynomials (resp. forms) are sums of
squares, as characterized by Hilbert.

\end{abstract}

\section{Introduction}
\subsection{Nonnegativity and sum of squares}
One of the cornerstones of real algebraic geometry is Hilbert's
seminal paper in 1888~\cite{Hilbert_1888}, where he gives a
complete characterization of the degrees and dimensions \aaan{for}
which nonnegative polynomials can be written as sums of squares of
polynomials. In particular, Hilbert proves in~\cite{Hilbert_1888}
that there exist nonnegative polynomials that are not sums of
squares, although explicit examples of such polynomials appeared
only about 80 years later and the study of the gap between
nonnegative and sums of squares polynomials continues to be an
active area of research to this day.

%

Motivated by a wealth of new applications and a modern viewpoint
that emphasizes efficient computation, there has also been a great
deal of recent interest from the optimization community in the
representation of nonnegative polynomials as sums of squares
(sos). Indeed, many fundamental problems in applied and
computational mathematics can be reformulated as either deciding
whether certain polynomials are nonnegative or searching over a
family of nonnegative polynomials. It is well-known however that
if the degree of the polynomial is four or larger, deciding
nonnegativity is an NP-hard problem (\aaan{this follows, e.g.,} as
an immediate corollary of NP-hardness of deciding matrix
copositivity~\cite{nonnegativity_NP_hard}). On the other hand, it
is also well-known that deciding whether a polynomial can be
written as a sum of squares can be reduced to solving a
semidefinite program, for which efficient algorithms, e.g. based
on interior point methods, are available. The general machinery of
the so-called ``sos relaxation'' has therefore been to replace the
intractable nonnegativity requirements with the more tractable sum
of squares requirements that obviously provide a sufficient
condition for polynomial nonnegativity.

Some relatively recent applications that sum of squares
relaxations have found span areas as diverse as control
theory~\cite{PhD:Parrilo},~\cite{PositivePolyInControlBook},
quantum computation~\cite{Pablo_Sep_Entang_States}, polynomial
games~\cite{Pablo_poly_games}, combinatorial
optimization~\cite{Stability_number_SOS}, and many others.

\subsection{Convexity and sos-convexity}

Aside from nonnegativity, \emph{convexity} is another fundamental
property of polynomials that is of both theoretical and practical
significance. Perhaps most notably, presence of convexity in an
optimization problem often leads to tractability of finding global
optimal solutions. Consider for example the problem of finding the
unconstrained global minimum of a polynomial. This is an NP-hard
problem in general~\cite{Minimize_poly_Pablo}, but if we know a
priori that the polynomial to be minimized is convex, then every
local \aaa{minimum} is global, and even simple gradient descent
methods can find a global minimum. There are other scenarios where
one would like to decide convexity of polynomials. For example, it
turns \aaan{out} that the $d$-th root of a degree $d$ polynomial
is a norm if and only if the polynomial is homogeneous, positive
definite, and convex~\cite{Blenders_Reznick}. Therefore, if we can
certify that a homogenous polynomial is convex and definite, then
we can use it to define a norm, which is useful in many
applications. In many other practical settings, we might want to
\emph{parameterize} a family of convex polynomials that have
certain properties, e.g., that serve as a convex envelope for a
non-convex function, approximate a more complicated function, or
fit some data points with minimum error. In the field of robust
control for example, it is common to \aaan{use} convex Lyapunov
functions to prove stability of uncertain dynamical systems
described \aaan{by} difference inclusions. Therefore, the ability
to efficiently search over convex polynomials would lead to
algorithmic ways of constructing such Lyapunov functions.

%
%

The question of determining the computational complexity of
deciding convexity of polynomials appeared in 1992 on a list of
seven open problems in complexity theory for numerical
optimization~\cite{open_complexity}. In a recent joint work with
A. Olshevsky and J. N. Tsitsiklis, we have shown that the problem
is strongly NP-hard even for polynomials of degree
four~\cite{NPhard_Convexity_arxiv}. If testing membership in the
set of convex polynomials is hard, searching or optimizing over
them is obviously also hard. This result, like any other hardness
result, stresses the need for good approximation algorithms that
can deal with many instances of the problem efficiently.

The focus of this work is on an algebraic notion known as
\emph{sos-convexity} (introduced formally by Helton and Nie
in~\cite{Helton_Nie_SDP_repres_2}), which is a sufficient
condition for convexity of polynomials based on \aaan{a} sum of
squares decomposition of the Hessian matrix; see
Definition~\ref{def:sos.convex}. As we will briefly review in
Section~\ref{sec:prelims}, the problem of deciding if a given
polynomial is sos-convex amounts to solving a single semidefinite
program.

Besides its computational implications, sos-convexity is an
appealing concept since it bridges the geometric and algebraic
aspects of convexity. Indeed, while the usual definition of
convexity is concerned only with the geometry of the epigraph, in
sos-convexity this geometric property (or the nonnegativity of the
Hessian) must be certified through a ``simple'' algebraic
identity, namely the sum of squares factorization of the Hessian.
The original motivation of Helton and Nie for defining
sos-convexity was in relation to the question of semidefinite
representability of convex sets~\cite{Helton_Nie_SDP_repres_2}.
But this notion has already appeared in the literature in a number
of other
settings~\cite{Lasserre_Jensen_inequality},~\cite{Lasserre_Convex_Positive},~\cite{convex_fitting},~\cite{Chesi_Hung_journal}.
In particular, there has been much recent interest in the role of
convexity in semialgebraic geometry
~\cite{Lasserre_Jensen_inequality},~\cite{Blekherman_convex_not_sos},~\cite{Monique_Etienne_Convex},~\cite{Lasserre_set_convexity}
and sos-convexity is a recurrent figure in this line of research.

\subsection{Contributions and organization of the paper}
The main contribution of this work is to establish the counterpart
of Hilbert's characterization of the gap between nonnegativity and
sum of squares for the notions of convexity and sos-convexity. We
start by presenting some background material in
Section~\ref{sec:prelims}. In
Section~\ref{sec:equiv.defs.of.sos.convexity}, we prove an
algebraic analogue of a classical result in convex analysis, which
provides three equivalent characterizations for sos-convexity
(Theorem~\ref{thm:sos.convexity.3.equivalent.defs}). This result
substantiates the fact that sos-convexity is \emph{the} right sos
relaxation for convexity. In Section~\ref{sec:first.examples}, we
present some examples of convex polynomials that are not
sos-convex. In Section~\ref{sec:full.characterization}, we provide
the characterization of the gap between convexity and
sos-convexity (Theorem~\ref{thm:full.charac.polys} and
Theorem~\ref{thm:full.charac.forms}).
Subsection~\ref{subsec:proof.equal.cases} includes the proofs of
the cases where convexity and sos-convexity are equivalent and
Subsection~\ref{subsec:proof.non.equal.cases} includes the proofs
of the cases where they are not. In particular,
Theorem~\ref{thm:minimal.2.6.and.3.6} and
Theorem~\ref{thm:minimal.3.4.and.4.4} present explicit examples of
convex but not sos-convex polynomials that have dimension and
degree as low as possible, and
Theorem~\ref{thm:conv_not_sos_conv_forms_n3d} provides a general
construction for producing such polynomials in higher degrees.
Some concluding remarks and an open problem are presented in
Section~\ref{sec:concluding.remarks}.

\section{Preliminaries}\label{sec:prelims}
\subsection{Background on nonnegativity and sum of squares}\label{subsec:nonnegativity.sos.basics}

A (multivariate) \emph{polynomial} $p\mathrel{\mathop:}=p(x)$ in
variables $x\mathrel{\mathop:}=(x_1,\ldots,x_n)^T$ is a function
from $\mathbb{R}^n$ to $\mathbb{R}$ that is a finite linear
combination of monomials:
\begin{equation}\nonumber
p(x)=\sum_{\alpha}c_\alpha x^\alpha=\sum_{\aaan{(}\alpha_1,
\ldots, \alpha_n\aaan{)}} c_{\alpha_1,\ldots,\alpha_n}
x_1^{\alpha_1} \cdots x_n^{\alpha_n} ,
\end{equation}
where the sum is over $n$-tuples of nonnegative integers
\aaa{$\alpha\mathrel{\mathop:}=(\alpha_1,\ldots,\alpha_n)$}. We
will be concerned throughout with polynomials with real
coefficients, i.e., we will have $c_\alpha\in\mathbb{R}$. The ring
of polynomials in $n$ variables with real coefficients is denoted
by $\mathbb{R}[x]$. The \emph{degree} of a monomial $x^\alpha$ is
equal to $\alpha_1 + \cdots + \alpha_n$. The degree of a
polynomial $p\in\mathbb{R}[x]$ is defined to be the highest degree
of its component monomials. A polynomial $p$ is said to be
\emph{nonnegative} or \emph{positive semidefinite (psd)} if
$p(x)\geq0$ for all $x\in\mathbb{R}^n$. Clearly, a necessary
condition for a polynomial to be psd is for its degree to be even.
We say that $p$ is a \emph{sum of squares (sos)}, if there exist
polynomials $q_{1},\ldots,q_{m}$ such that
$p=\sum_{i=1}^{m}q_{i}^{2}$. We denote the set of psd (resp. sos)
polynomials in $n$ variables and degree $d$ by $\tilde{P}_{n,d}$
(resp. $\tilde{\Sigma}_{n,d}$). Any sos polynomial is clearly psd,
so we have $\tilde{\Sigma}_{n,d}\subseteq \tilde{P}_{n,d}$.

A \emph{homogeneous polynomial} (or a \emph{form}) is a polynomial
where all the monomials have the same degree. A form $p$ of degree
$d$ is a homogeneous function of degree $d$ since it satisfies
$p(\lambda x)=\lambda^d p(x)$ for any scalar
$\lambda\in\mathbb{R}$. We say that a form $p$ is \emph{positive
definite} if $p(x)>0$ for all $x\neq0$ in $\mathbb{R}^n$.
Following standard notation, we denote the set of psd (resp. sos)
homogeneous polynomials in $n$ variables and degree $d$ by
$P_{n,d}$ (resp. $\Sigma_{n,d}$). Once again, we have the obvious
inclusion $\Sigma_{n,d}\subseteq P_{n,d}$. All of the four sets
$\Sigma_{n,d}, P_{n,d}, \tilde{\Sigma}_{n,d}, \tilde{P}_{n,d}$ are
closed convex cones. The closedness of the sum of squares cone may
not be so obvious. This fact was first proved by
Robinson~\cite{RobinsonSOS}. We will make crucial use of it in the
proof of Theorem~\ref{thm:sos.convexity.3.equivalent.defs} in the
next section.

Any form of degree $d$ in $n$ variables can be ``dehomogenized''
into a polynomial of degree $\leq d$ in $n-1$ variables by setting
$x_n=1$. Conversely, any polynomial $p$ of degree $d$ in $n$
variables can be ``homogenized'' into a form $p_h$ of degree $d$
in $n+1$ variables, by adding a new variable $y$, and letting $
p_h(x_1,\ldots,x_n,y)\mathrel{\mathop:}=y^{d} \, p\left({x_1}/{y},
\ldots, {x_n}/{y}\right)$.  The properties of being psd and sos
are preserved under homogenization and
dehomogenization~\cite{Reznick}.

A very natural and fundamental question that as we mentioned
earlier was answered by Hilbert is to understand \aaan{for} what
dimensions and degrees nonnegative polynomials (or forms) can
\aaan{always} be represented as sums of squares, i.e, for what
values of $n$ and $d$ we have
$\tilde{\Sigma}_{n,d}=\tilde{P}_{n,d}$ or $\Sigma_{n,d}=P_{n,d}$.
Note that because of the argument in the last paragraph, we have
$\tilde{\Sigma}_{n,d}=\tilde{P}_{n,d}$ if and only if
$\Sigma_{n+1,d}=P_{n+1,d}$. Hence, it is enough to answer the
question just for polynomials or just for forms\aaa{.}

\begin{theorem}[Hilbert,~\cite{Hilbert_1888}]\label{thm:Hilbert}
$\tilde{\Sigma}_{n,d}=\tilde{P}_{n,d}$ if and only if $n=1$ or
$d=2$ or $(n,d)=(2,4)$. Equivalently, $\Sigma_{n,d}=P_{n,d}$ if
and only if $n=2$ or $d=2$ or $(n,d)=(3,4)$.
\end{theorem}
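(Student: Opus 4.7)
The plan is to split the characterization into two directions — the "equality" cases and the "strict containment" cases — and to prove the statement for forms, since the excerpt has already reduced the polynomial version to the form version via dehomogenization. Among the equality cases, two are easy. The case $d=2$ is immediate: a psd quadratic form $x^T Q x$ has a psd matrix $Q$, so Cholesky gives $Q = L^T L$ and $p = \sum_i (L_i x)^2$. The case $n=2$ (binary forms) reduces after dehomogenization to a nonnegative univariate polynomial, whose real roots must have even multiplicity and whose complex roots come in conjugate pairs, yielding a factorization into squares and quadratics of the form $(x-a)^2 + b^2$; the Brahmagupta--Fibonacci identity $(a^2+b^2)(c^2+d^2) = (ac-bd)^2 + (ad+bc)^2$ then collapses such a product into a sum of two squares, and re-homogenizing recovers a form-level sos decomposition.

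The genuinely hard equality case — and the main obstacle of the proof — is $(n,d) = (3,4)$: every nonnegative ternary quartic is a sum of (at most three) squares of quadratic forms. This is Hilbert's theorem on ternary quartics, and it is not elementary. I would follow a modern route that uses the geometry of plane quartics: the argument localizes the boundary of $P_{3,4}$ through the discriminant of the associated plane curve, performs a careful case analysis on the real configuration of its singularities, and explicitly constructs three quadratic forms whose sum of squares matches $p$ on a dense open subset and then on all of $P_{3,4}$ using closedness of $\Sigma_{3,4}$ (noted earlier in the excerpt). An alternative is Rudin's elementary linear-algebraic proof, or a Powers--Reznick continuity argument.

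For the strict-containment direction, the plan is (i) to exhibit explicit psd-not-sos forms in the two "minimal bad" cases $(n,d) = (3,6)$ and $(n,d) = (4,4)$, and (ii) to propagate these to every remaining $(n,d)$ by a short lifting argument. For $(3,6)$ I would use the Motzkin form
\begin{equation*}
M(x,y,z) = x^4 y^2 + x^2 y^4 + z^6 - 3 x^2 y^2 z^2,
\end{equation*}
whose nonnegativity follows from AM--GM applied to the monomials $x^4 y^2, x^2 y^4, z^6$, and whose non-sos nature is verified by taking a hypothetical decomposition $M = \sum_i q_i^2$, using the Newton polytope of $M$ to restrict the monomials appearing in each $q_i$ to $\{x^2 y, x y^2, z^3, x y z\}$, and extracting a contradiction from the coefficient of $x^2 y^2 z^2$. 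For $(4,4)$ I would use a Choi--Lam form such as
\begin{equation*}
C(x,y,z,w) = x^2 y^2 + y^2 z^2 + z^2 x^2 + w^4 - 4 x y z w,
\end{equation*}
whose nonnegativity again follows from AM--GM and whose non-sos nature is proved by the same Newton-polytope pruning.

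Finally, to reach every remaining $(n,d)$, I would apply the following lifting lemma to $M$ and $C$: if $p \in P_{n,d} \setminus \Sigma_{n,d}$, then (a) the same $p$, regarded as a form in $P_{n+1,d}$ by adjoining an unused variable, remains psd and not sos, since substituting $x_{n+1} = 0$ in any hypothetical sos decomposition in $n+1$ variables produces an sos decomposition of $p$ in $n$ variables; and (b) the form $x_1^2 \, p \in P_{n, d+2}$ is psd and not sos, since if $x_1^2 p = \sum_i q_i^2$ then setting $x_1 = 0$ forces each $q_i$ to be divisible by $x_1$, so writing $q_i = x_1 r_i$ gives $p = \sum_i r_i^2$. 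Iterating (a) and (b) starting from the Motzkin form yields psd-not-sos forms in all $(n,d)$ with $n \geq 3$ and $d \geq 6$ even, and starting from the Choi--Lam form yields them in all $(n,d)$ with $n \geq 4$ and $d \geq 4$ even; together these are exactly the complement of $\{n=2\} \cup \{d=2\} \cup \{(n,d)=(3,4)\}$, completing the characterization.
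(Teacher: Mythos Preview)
The paper does not actually prove this theorem: it is stated as Hilbert's result with citation, and the surrounding paragraph only \emph{describes} the proof in broad strokes (the easy cases $n=1$ and $d=2$, the hard case $(2,4)$ referred to the literature, and Hilbert's nonconstructive existence argument for $\tilde{P}_{2,6}\setminus\tilde{\Sigma}_{2,6}$ and $\tilde{P}_{3,4}\setminus\tilde{\Sigma}_{3,4}$, with the Motzkin and Robinson forms given as the now-standard explicit witnesses). So there is no paper proof to compare against line by line.

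That said, your sketch is correct and aligns with the paper's narrative. The equality cases are handled just as the paper indicates: $d=2$ and $n=2$ are elementary, and $(3,4)$ is the deep ternary-quartic theorem that you rightly do not try to prove from scratch. For the strict-containment direction, your Motzkin form is exactly what the paper quotes for $(3,6)$; for $(4,4)$ the paper quotes Robinson's form while you use the Choi--Lam quartic $x^2y^2+y^2z^2+z^2x^2+w^4-4xyzw$, which is equally valid and arguably cleaner to verify by AM--GM and Newton-polytope pruning. Your lifting lemma (adjoin an unused variable; multiply by $x_1^2$) is the standard device the paper alludes to when it says ``from this, it follows with simple arguments that in all higher dimensions and degrees there must also be psd but not sos polynomials,'' citing Reznick's survey. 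So your proposal is a faithful and correct expansion of what the paper leaves implicit.
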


The proofs of $\tilde{\Sigma}_{1,d}=\tilde{P}_{1,d}$ and
$\tilde{\Sigma}_{n,2}=\tilde{P}_{n,2}$ are relatively simple and
were known before Hilbert. On the other hand, the proof of the
fairly surprising fact that $\tilde{\Sigma}_{2,4}=\tilde{P}_{2,4}$
(or equivalently $\Sigma_{3,4}=P_{3,4}$) is more involved. We
refer the interested reader to
\cite{NewApproach_Hilbert_Ternary_Quatrics},
\cite{Scheiderer_ternary_quartic},
\cite{Choi_Lam_extremalPSDforms}, and references in~\cite{Reznick}
for some modern expositions and alternative proofs of this result.
Hilbert's other main contribution was to show that these are the
only cases where nonnegativity and sum of squares are equivalent
by giving a nonconstructive proof of existence of polynomials in
$\tilde{P}_{2,6}\setminus\tilde{\Sigma}_{2,6}$ and
$\tilde{P}_{3,4}\setminus\tilde{\Sigma}_{3,4}$ (or equivalently
forms in $P_{3,6}\setminus\Sigma_{3,6}$ and
$P_{4,4}\setminus\Sigma_{4,4}$). From this, it follows with simple
arguments that in all higher dimensions and degrees there must
also be psd but not sos polynomials; see~\cite{Reznick}. Explicit
examples of such polynomials appeared in the 1960s starting from
the celebrated Motzkin form~\cite{MotzkinSOS}:
\begin{equation}\label{eq:Motzkin.form}
M(x_1,x_2,x_3)=x_1^4x_2^2+x_1^2x_2^4-3x_1^2x_2^2x_3^2+x_3^6,
\end{equation}
which belongs to $P_{3,6}\setminus\Sigma_{3,6}$, and continuing a
few years later with the Robinson form~\cite{RobinsonSOS}:
\begin{equation}\label{eq:Robinston.form}
R(x_1,x_2,x_3,x_4)=x_1^2(x_1-x_4)^2+x_2^2(x_2-x_4)^2+x_3^2(x_3-x_4)^2+2x_1x_2x_3(x_1+x_2+x_3-2x_4),
\end{equation}
which belongs to $P_{4,4}\setminus\Sigma_{4,4}$.

Several other constructions of psd polynomials that are not sos
have appeared in the literature since. An excellent survey
is~\cite{Reznick}. See also~\cite{Reznick_Hilbert_construciton}
and~\cite{Blekherman_nonnegative_and_sos}.


\subsection{Connection to semidefinite programming and matrix
generalizations}\label{subsec:sos.sdp.and.matrix.generalize} As we
remarked before, what makes sum of squares an appealing concept
from a computational viewpoint is its relation to semidefinite
programming. It is well-known (see e.g. \cite{PhD:Parrilo},
\cite{sdprelax}) that a polynomial $p$ in $n$ variables and of
even degree $d$ is a sum of squares if and only if there exists a
positive semidefinite matrix $Q$ (often called the Gram matrix)
such that
$$p(x)=z^{T}Qz,$$
where $z$ is the vector of monomials of degree up to $d/2$
\begin{equation}\label{eq:monomials}
z=[1,x_{1},x_{2},\ldots,x_{n},x_{1}x_{2},\ldots,x_{n}^{d/2}].
\end{equation}
The set of all such matrices $Q$ is the feasible set of a
semidefinite program (SDP). For fixed $d$, the size of this
semidefinite program is polynomial in $n$. Semidefinite programs
can be solved with arbitrary accuracy in polynomial time. There
are several implementations of semidefinite programming solvers,
based on interior point algorithms among others, that are very
efficient in practice and widely used; see~\cite{VaB:96} and
references therein.


The notions of positive semidefiniteness and sum of squares of
scalar polynomials can be naturally extended to polynomial
matrices, i.e., matrices with entries in $\mathbb{R}[x]$. We say
that a symmetric polynomial matrix $U(x)\in \mathbb{R}[x]^{m
\times m}$ is positive semidefinite if $U(x)$ is positive
semidefinite in the matrix sense for all $x\in \mathbb{R}^n$, i.e,
if $U(x)$ has nonnegative eigenvalues for all $x\in \mathbb{R}^n$.
It is straightforward to see that this condition holds if and only
if the scalar polynomial $y^{T}U(x)y$ in $m+n$ variables $[x; y]$
is psd. A homogeneous polynomial matrix $U(x)$ is said to be
positive definite, if it is positive definite in the matrix sense,
i.e., has positive eigenvalues, for all $x\neq0$ in
$\mathbb{R}^n$. The definition of an sos-matrix is as follows
\cite{Kojima_SOS_matrix}, \cite{Symmetry_groups_Gatermann_Pablo},
\cite{matrix_sos_Hol}.

\begin{definition}\label{def:sos-matrix}
A symmetric polynomial matrix $U(x)\in~\mathbb{R}[x]^{m \times
m}$,$ \ x\in \mathbb{R}^n,$ is an \emph{sos-matrix} if there
exists a polynomial matrix $V(x)\in \mathbb{R}[x]^{s \times m}$
for some $s\in\mathbb{N}$, such that $P(x)~=~V^{T}(x)V(x)$.
\end{definition}

It turns out that a polynomial matrix $U(x)\in \mathbb{R}[x]^{m
\times m}$, $\ x\in~\mathbb{R}^n,$ is an sos-matrix if and only if
the scalar polynomial $y^{T}U(x)y$ is a sum of squares in
$\mathbb{R}[x; y]$; see~\cite{Kojima_SOS_matrix}. This is a useful
fact because in particular it gives us an easy way of checking
whether a polynomial matrix is an sos-matrix by solving a
semidefinite program. Once again, it is obvious that being an
sos-matrix is a sufficient condition for a polynomial matrix to be
positive semidefinite.


\subsection{Background on convexity and sos-convexity}\label{subsec:convexity.sos.convexity.basics}

A polynomial $p$ is (globally) convex if for all $x$ and $y$ in
$\mathbb{R}^n$ and all $\lambda \in [0,1]$, we have
\begin{equation}\label{eq:convexity.defn.}
p(\lambda x+(1-\lambda)y)\leq \lambda p(x)+(1-\lambda)p(y).
\end{equation}
Since polynomials are continuous functions, the inequality in
(\ref{eq:convexity.defn.}) holds if and only if it holds for a
fixed value of $\lambda\in(0,1)$, say, $\lambda=\frac{1}{2}$. In
other words, $p$ is convex if and only if
\begin{equation}\label{eq:convexity.with.lambda.0.5}
p\left(\textstyle{\frac{1}{2}}
x+\textstyle{\frac{1}{2}}y\right)\leq \textstyle{\frac{1}{2}
p(x)}+\textstyle{\frac{1}{2}}p(y)
\end{equation} for all $x$ and
$y$; see e.g.~\cite[p. 71]{Rudin_RealComplexAnalysis}. Except for
the trivial case of linear polynomials, an odd degree polynomial
is clearly never convex.

For the sake of direct comparison with a result that we derive in
the next section
(Theorem~\ref{thm:sos.convexity.3.equivalent.defs}), we recall
next a classical result from convex analysis on the first and
second order characterization of convexity. The proof can be found
in many convex optimization textbooks, e.g.~\cite[p.
70]{BoydBook}. The theorem is of course true for any twice
differentiable function, but for our purposes we state it for
polynomials.

\begin{theorem}\label{thm:classical.first.2nd.order.charac.}
Let $p\mathrel{\mathop:}=p(x)$ be a polynomial. Let $\nabla
p\mathrel{\mathop:}=\nabla p(x)$ denote its gradient and let
$H\mathrel{\mathop:}=H(x)$ be its Hessian, i.e., the $n \times n$
symmetric matrix of second derivatives. Then the following are
equivalent.

\textbf{(a)} $p\left(\textstyle{\frac{1}{2}}
x+\textstyle{\frac{1}{2}}y\right)\leq \textstyle{\frac{1}{2}
p(x)}+\textstyle{\frac{1}{2}}p(y),\quad \forall
x,y\in\mathbb{R}^n$; (i.e., $p$ is convex).

\textbf{(b)} $p(y) \geq p(x)+\nabla p(x)^T(y-x),\quad \forall
x,y\in\mathbb{R}^n$\aaan{; (i.e., $p$ lies above the supporting
hyperplane at every point).}

\textbf{(c)} $y^TH(x)y\geq0,\quad \forall x,y\in\mathbb{R}^n$;
(i.e., $H(x)$ is a positive semidefinite polynomial matrix).
\end{theorem}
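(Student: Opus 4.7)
The plan is to prove the three-way equivalence by establishing the cyclic chain $(a) \Rightarrow (b) \Rightarrow (c) \Rightarrow (a)$. The argument mirrors the classical one from convex analysis for twice-differentiable functions, with two simplifying features in the polynomial setting: polynomials are continuous (indeed $C^\infty$), and the Taylor expansion of any univariate restriction $t \mapsto p(x+ty)$ is an exact finite sum rather than an approximation with remainder.

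For $(a) \Rightarrow (b)$, I would first promote the midpoint inequality to the full convexity statement $p(\lambda x + (1-\lambda) y) \leq \lambda p(x) + (1-\lambda) p(y)$ for every $\lambda \in [0,1]$, using the classical fact (recalled in the paragraph just before the theorem) that midpoint convexity plus continuity implies full convexity. Rewriting this inequality with $\lambda = t \in (0,1)$ gives $\tfrac{p(x + t(y-x)) - p(x)}{t} \leq p(y) - p(x)$, and letting $t \downarrow 0$ the left-hand side tends to the directional derivative $\nabla p(x)^T (y-x)$, which yields (b). For $(b) \Rightarrow (c)$, fix $x, y \in \mathbb{R}^n$ and expand the univariate polynomial $q(t) := p(x + ty)$ exactly as $q(t) = p(x) + t \nabla p(x)^T y + \tfrac{t^2}{2} y^T H(x) y + r(t)$ with $r(t) = O(t^3)$. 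Applying (b) to the pair $(x, x+ty)$ gives $q(t) - q(0) - t \nabla p(x)^T y \geq 0$, so $\tfrac{t^2}{2} y^T H(x) y + r(t) \geq 0$ for all $t$; dividing by $t^2$ and sending $t \to 0$ forces $y^T H(x) y \geq 0$.

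For $(c) \Rightarrow (a)$, given any $x, y \in \mathbb{R}^n$ I would restrict $p$ to the line through $x$ and $y$ by setting $g(t) := p((1-t)x + ty)$. By the chain rule, $g''(t) = (y-x)^T H((1-t)x + ty)(y-x)$, which is nonnegative for every $t$ by (c). Hence $g$ is a single-variable polynomial with nonnegative second derivative and is therefore convex on $\mathbb{R}$ (integrate $g''$ twice, or observe that $g'$ is nondecreasing and apply the mean value theorem). Specializing to $t = \tfrac{1}{2}$ gives $g(\tfrac{1}{2}) \leq \tfrac{1}{2} g(0) + \tfrac{1}{2} g(1)$, which is precisely (a). The only genuine subtlety in the whole proof is the continuity-based promotion of midpoint convexity to full $\lambda$-convexity in the implication $(a) \Rightarrow (b)$; once that is in hand, every remaining step reduces to routine manipulations of exact polynomial Taylor expansions and one-variable calculus.
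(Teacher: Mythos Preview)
Your proof is correct and follows the standard textbook argument. Note, however, that the paper does not actually supply its own proof of this theorem: it is stated as a classical result from convex analysis with a reference to Boyd's textbook, and the paper's original contribution is the \emph{sos analogue} in Theorem~\ref{thm:sos.convexity.3.equivalent.defs}. So there is no paper-proof to compare against here, but your argument is the one any referee would expect.

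One tiny cosmetic point: when you write ``with $\lambda = t$'' in the displayed convexity inequality $p(\lambda x + (1-\lambda)y) \leq \lambda p(x) + (1-\lambda)p(y)$ and then land on $\tfrac{p(x+t(y-x))-p(x)}{t} \leq p(y)-p(x)$, the latter actually corresponds to the inequality with $(1-t)$ and $t$ reversed. This is harmless since full convexity is symmetric in $\lambda \leftrightarrow 1-\lambda$, but you may want to align the notation to avoid confusing a reader.
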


Helton and Nie proposed in~\cite{Helton_Nie_SDP_repres_2} the
notion of \emph{sos-convexity} as an sos relaxation for the second
order characterization of convexity (condition \textbf{(c)}
above).

\begin{definition}\label{def:sos.convex}
A polynomial $p$ is \emph{sos-convex} if its Hessian
$H\mathrel{\mathop:}=H(x)$ is an sos-matrix.
\end{definition}

With what we have discussed so far, it should be clear that
sos-convexity is a sufficient condition for convexity of
polynomials \aaa{and} can be checked with semidefinite
programming. In the next section, we will show some other natural
sos relaxations for polynomial convexity, which will turn out to
be equivalent to sos-convexity.

We end this section by introducing some final notation:
$\tilde{C}_{n,d}$ and $\tilde{\Sigma C}_{n,d}$ will respectively
denote the set of convex and sos-convex polynomials in $n$
variables and degree $d$; $C_{n,d}$ and $\Sigma C_{n,d}$ will
respectively denote set of convex and sos-convex homogeneous
polynomials in $n$ variables and degree $d$. Again, these four
sets are closed convex cones and we have the obvious inclusions
$\tilde{\Sigma C}_{n,d}\subseteq\tilde{C}_{n,d}$ and $\Sigma
C_{n,d}\subseteq C_{n,d}$.

\section{Equivalent algebraic relaxations for convexity of
polynomials}\label{sec:equiv.defs.of.sos.convexity} An obvious way
to formulate alternative sos relaxations for convexity of
polynomials is to replace every inequality in
Theorem~\ref{thm:classical.first.2nd.order.charac.} with its sos
version. In this section we examine how these relaxations relate
to each other. We also comment on the size of the resulting
semidefinite programs.

Our result below can be thought of as an algebraic analogue of
Theorem~\ref{thm:classical.first.2nd.order.charac.}.
\begin{theorem} \label{thm:sos.convexity.3.equivalent.defs}
Let $p\mathrel{\mathop:}=p(x)$ be a polynomial of degree $d$ in
$n$ variables with its gradient and Hessian denoted respectively
by $\nabla p\mathrel{\mathop:}=\nabla p(x) $ and
$H\mathrel{\mathop:}=H(x)$. Let $g_{\lambda}$, $g_\nabla$, and
$g_{\nabla^2}$ be defined as
\begin{equation} \label{eq:defn.g_lambda.g_grad.g_grad2}
\begin{array}{lll}
g_{\lambda}(x,y)&=&(1-\lambda)p(x)+\lambda p(y)-p((1-\lambda)
x+\lambda y),\\
g_\nabla(x,y)&=&p(y)-p(x)-\nabla p(x)^T(y-x), \\
g_{\nabla^2}(x,y)&=&y^{T}H(x)y.
\end{array}
\end{equation}
Then the following are equivalent:

\textbf{(a)}  \  $g_{\frac{1}{2}}(x,y)$ is sos\footnote{The
constant $\frac{1}{2}$ in $g_{\frac{1}{2}}(x,y)$ of condition
\textbf{(a)} is arbitrary and chosen for convenience. One can show
that $g_{\frac{1}{2}}$ being sos implies that $g_{\lambda}$ is sos
for any fixed $\lambda\in[0,1]$. Conversely, if $g_{\lambda}$ is
sos for some $\lambda\in(0,1)$, then $g_{\frac{1}{2}}$ is sos. The
proofs are similar to the proof of \textbf{(a)$\Rightarrow$(b)}.
}.


\textbf{(b)}  \  $g_\nabla(x,y)$ is sos.

\textbf{(c)}   \  $g_{\nabla^2}(x,y)$ is sos; (i.e., $H(x)$ is an
sos-matrix).
\end{theorem}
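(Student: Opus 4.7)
\noindent The plan is to establish the cycle \textbf{(c)}$\Rightarrow$\textbf{(b)}$\Rightarrow$\textbf{(a)}$\Rightarrow$\textbf{(c)}, which parallels the classical proof of Theorem~\ref{thm:classical.first.2nd.order.charac.} but keeps the sum-of-squares structure intact at every step. I expect the first two implications to be constructive, transforming a given sos certificate into another by explicit algebraic manipulation, whereas the last implication will rely on the closedness of the cone of sos polynomials of bounded degree (Robinson's theorem, which Subsection~\ref{subsec:nonnegativity.sos.basics} flags as being invoked precisely here).

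\medskip
\noindent For \textbf{(c)}$\Rightarrow$\textbf{(b)}, I would start from the integral form of Taylor's theorem,
$$g_\nabla(x,y)=\int_0^1 (1-t)\,(y-x)^{T}H(x+t(y-x))(y-x)\,dt.$$
Writing $y^{T}H(x)y=\sum_i q_i(x,y)^2$ and substituting $(x,y)\mapsto(x+t(y-x),\,y-x)$ into each $q_i$ gives $(y-x)^{T}H(x+t(y-x))(y-x)=\sum_i r_i(x,y,t)^2$ for polynomials $r_i\in\mathbb{R}[x,y,t]$. Expanding $r_i=\sum_k a_{ik}(x,y)\,t^k$ and integrating term by term yields
$$g_\nabla(x,y)=\sum_{i,k,\ell}M_{k\ell}\,a_{ik}(x,y)\,a_{i\ell}(x,y),\qquad M_{k\ell}=\int_0^1(1-t)\,t^{k+\ell}\,dt=\frac{1}{(k+\ell+1)(k+\ell+2)}.$$
Since $M$ is the moment matrix of the positive measure $(1-t)\,dt$ on $[0,1]$, it is positive semidefinite, and a Cholesky factorization $M=L^{T}L$ rewrites the right-hand side as an explicit sum of squares in $(x,y)$.

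\medskip
\noindent For \textbf{(b)}$\Rightarrow$\textbf{(a)}, a short calculation establishes the identity
$$g_{1/2}(x,y)=\tfrac{1}{2}\,g_\nabla\!\Bigl(\tfrac{x+y}{2},\,x\Bigr)+\tfrac{1}{2}\,g_\nabla\!\Bigl(\tfrac{x+y}{2},\,y\Bigr),$$
with the gradient contributions canceling because $x+y-2\cdot\tfrac{x+y}{2}=0$. Since the sos property is preserved under affine substitutions of the variables, plugging in the sos decomposition of $g_\nabla$ produces an explicit sos certificate for $g_{1/2}$.

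\medskip
\noindent The implication \textbf{(a)}$\Rightarrow$\textbf{(c)} is where I expect the main obstacle, and it is the only step that will use closedness of the sos cone. The strategy is to recover the Hessian quadratic form as a limit. Exploiting the fact that $g_{1/2}$ vanishes to second order on the diagonal $y=x$, Taylor expansion gives $g_{1/2}(x,x+\varepsilon y)=\tfrac{\varepsilon^2}{8}\,y^{T}H(x)y+O(\varepsilon^3)$, so $h_\varepsilon(x,y):=\tfrac{8}{\varepsilon^2}\,g_{1/2}(x,x+\varepsilon y)$ is a polynomial in $(x,y)$ of degree $d$ that specializes at $\varepsilon=0$ to $y^{T}H(x)y$. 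For each fixed $\varepsilon>0$, $h_\varepsilon$ is itself sos in $(x,y)$: writing the given decomposition $g_{1/2}=\sum_i f_i^2$, the identity $f_i(x,x)=0$ forces a factorization $f_i(x,y)=\sum_j(y_j-x_j)g_{ij}(x,y)$, so $f_i(x,x+\varepsilon y)$ is divisible by $\varepsilon$ and the $\varepsilon^{-2}$ factor is absorbed cleanly into an sos representation. Letting $\varepsilon\to 0^+$, the coefficients of $h_\varepsilon$ converge to those of $y^{T}H(x)y$, and closedness of $\tilde{\Sigma}_{2n,d}$ in the finite-dimensional space of polynomials of degree at most $d$ in $2n$ variables forces the limit to be sos, closing the cycle.
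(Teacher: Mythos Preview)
Your cycle is correct, and your \textbf{(b)}$\Rightarrow$\textbf{(a)} identity is exactly the one the paper uses. The other two steps differ from the paper's route in interesting ways.

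For \textbf{(c)}$\Rightarrow$\textbf{(b)}, the paper also starts from the integral Taylor remainder, but instead of your moment-matrix argument it discretizes the integral into Riemann sums (each a nonnegative combination of sos polynomials of degree $d$) and invokes closedness of the sos cone to pass to the limit. Your Cholesky factorization of the moment matrix $M_{k\ell}=\int_0^1(1-t)t^{k+\ell}\,dt$ is more constructive and avoids closedness at this step.

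The paper never argues \textbf{(a)}$\Rightarrow$\textbf{(c)} directly. It instead proves \textbf{(a)}$\Rightarrow$\textbf{(b)} via the dyadic recursion
\[
g_{1/2^{k+1}}(x,y)=\tfrac{1}{2}\,g_{1/2^{k}}(x,y)+g_{1/2}\!\left(x,\ \tfrac{2^{k}-1}{2^{k}}x+\tfrac{1}{2^{k}}y\right),
\]
which inductively shows every $g_{1/2^k}$ is sos, and then lets $\lambda=1/2^k\to 0$ in $g_\lambda/\lambda$ to reach $g_\nabla$ (using closedness). Separately, \textbf{(b)}$\Rightarrow$\textbf{(c)} is obtained by another rescaled-Taylor limit, again using closedness. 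Your direct route, factoring each $f_i$ through the ideal $(y_1-x_1,\ldots,y_n-x_n)$ to absorb the $\varepsilon^{-2}$, is a genuinely different and cleaner maneuver. In fact you can push it further: since your $\tilde f_i(x,y,\varepsilon)$ are \emph{polynomials} in $\varepsilon$, the identity $h_\varepsilon=8\sum_i \tilde f_i(x,y,\varepsilon)^2$ holds identically, so you may simply set $\varepsilon=0$ rather than take a limit---closedness is then not needed anywhere in your proof, whereas the paper invokes it three times.
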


\begin{proof}
\textbf{(a)$\Rightarrow$(b)}: Assume $g_{\frac{1}{2}}$ is sos. We
start by proving that $g_{\frac{1}{2^k}}$ will also be sos for any
integer $k\geq2$. A little bit of straightforward algebra yields
the relation
\begin{equation}\label{eq:g_diatic_relation}
g_{\frac{1}{2^{k+1}}}(x,y)=\textstyle{\frac{1}{2}}g_{\frac{1}{2^{k}}}(x,y)+g_{\frac{1}{2}}\left(x,\textstyle{\frac{2^{k}-1}{2^{k}}}x+\textstyle{\frac{1}{2^{k}}}y\right).
\end{equation}
The second term on the right hand side of
(\ref{eq:g_diatic_relation}) is always sos because
$g_{\frac{1}{2}}$ is sos. Hence, this relation shows that for any
$k$, if $g_{\frac{1}{2^k}}$ is sos, then so is
$g_{\frac{1}{2^{k+1}}}$. Since for $k=1$, both terms on the right
hand side of (\ref{eq:g_diatic_relation}) are sos by assumption,
induction immediately gives that $g_{\frac{1}{2^k}}$ is sos for
all $k$.


Now, let us rewrite $g_{\lambda}$ as
\begin{equation}\nonumber
g_{\lambda}(x,y)=p(x)+\lambda(p(y)-p(x))-p(x+\lambda(y-x)).
\end{equation}
We have
\begin{equation}\label{eq:g.lambda.rewritten}
\frac{g_{\lambda}(x,y)}{\lambda}=p(y)-p(x)-\frac{p(x+\lambda(y-x))-p(x)}{\lambda}.
\end{equation}
Next, we take the limit of both sides of
(\ref{eq:g.lambda.rewritten}) by letting
$\lambda=\frac{1}{2^k}\rightarrow0$ as $k\rightarrow\infty$.
Because $p$ is differentiable, the right hand side of
(\ref{eq:g.lambda.rewritten}) will converge to $g_\nabla$. On the
other hand, our preceding argument implies that
$\frac{g_{\lambda}}{\lambda}$ is an sos polynomial (of degree $d$
in $2n$ variables) for any $\lambda=\frac{1}{2^k}$. Moreover, as
$\lambda$ goes to zero, the coefficients of
$\frac{g_{\lambda}}{\lambda}$ remain bounded since the limit of
this sequence is $g_\nabla$, which must have bounded coefficients
(see (\ref{eq:defn.g_lambda.g_grad.g_grad2})). By closedness of
the sos cone, we conclude that the limit $g_\nabla$ must be sos.

\textbf{(b)$\Rightarrow$(a)}: Assume $g_\nabla$ is sos. It is easy
to check that
\begin{equation} \nonumber
g_{\frac{1}{2}}(x,y)=\textstyle{\frac{1}{2}}g_\nabla\left(\textstyle{\frac{1}{2}}x+\textstyle{\frac{1}{2}}y,x\right)+\textstyle{\frac{1}{2}}g_\nabla\left(\textstyle{\frac{1}{2}}x+\textstyle{\frac{1}{2}}y,y\right),
\end{equation}
and hence $g_{\frac{1}{2}}$ is sos.

\textbf{(b)$\Rightarrow$(c)}: Let us write the second order Taylor
approximation of $p$ around $x$:
\begin{equation}\nonumber
\begin{array}{ll}
p(y)=&p(x)+\nabla^{T}p(x)(y-x) \\
\   &+\frac{1}{2}(y-x)^{T}H(x)(y-x)+o(||y-x||^2).
\end{array}
\end{equation}
After rearranging terms, letting $y=x+\epsilon z$ (for
$\epsilon>0$), and dividing both sides by $\epsilon^2$ we get:
\begin{equation}\label{eq:Taylor.second.order.rearranged}
(p(x+\epsilon
z)-p(x))/\epsilon^2-\nabla^{T}p(x)z/\epsilon=\frac{1}{2}z^{T}H(x)z+1/\epsilon^2o(\epsilon^2||z||^2).
\end{equation}
The left hand side of (\ref{eq:Taylor.second.order.rearranged}) is
$g_\nabla(x,x+\epsilon z)/\epsilon^2$ and therefore for any fixed
$\epsilon>0$, it is an sos polynomial by assumption. As we take
$\epsilon\rightarrow0$, by closedness of the sos cone, the left
hand side of (\ref{eq:Taylor.second.order.rearranged}) converges
to an sos polynomial. On the other hand, as the limit is taken,
the term $\frac{1}{\epsilon^2}o(\epsilon^2||z||^2)$ vanishes and
hence we have that $z^TH(x)z$ must be sos.

\textbf{(c)$\Rightarrow$(b)}: Following the strategy of the proof
of the classical case in~\cite[p. 165]{Tits_lec.notes}, we start
by writing the Taylor expansion of $p$ around $x$ with the
integral form of the remainder:
\begin{equation}\label{eq:Taylor.expan.Cauchy.rem}
p(y)=p(x)+\nabla^{T}p(x)(y-x)+\int_0^1(1-t)(y-x)^{T}H(x+t(y-x))(y-x)dt.
\end{equation}
Since $y^{T}H(x)y$ is sos by assumption, for any $t\in[0,1]$ the
integrand
$$(1-t)(y-x)^{T}H(x+t(y-x))(y-x)$$ is an sos polynomial of degree $d$ in $x$ and
$y$. From (\ref{eq:Taylor.expan.Cauchy.rem}) we have
$$g_\nabla=\int_0^1(1-t)(y-x)^{T}H(x+t(y-x))(y-x)dt.$$
It then follows that $g_\nabla$ is sos because integrals of sos
polynomials, if they exist, are sos. \aaa{To see the latter fact,
note that we can write the integral as a limit of a sequence of
Riemann sums by discretizing the interval $[0,1]$ over which we
are integrating. Since every finite Riemann sum is an sos
polynomial of degree $d$, and since the sos cone is closed, it
follows that the limit of the sequence must be sos.}

%
\end{proof}
We conclude that conditions \textbf{(a)}, \textbf{(b)}, and
\textbf{(c)} are equivalent sufficient conditions for convexity of
polynomials, and can each be checked with a semidefinite program
as explained in
Subsection~\ref{subsec:sos.sdp.and.matrix.generalize}. It is easy
to see that all three polynomials $g_{\frac{1}{2}}(x,y)$,
$g_\nabla(x,y)$, and $g_{\nabla^2}(x,y)$ are polynomials in $2n$
variables and of degree $d$. (Note that each differentiation
reduces the degree by one.) Each of these polynomials have a
specific structure that can be exploited for formulating smaller
SDPs. For example, the symmetries
$g_{\frac{1}{2}}(x,y)=g_{\frac{1}{2}}(y,x)$ and
$g_{\nabla^2}(x,-y)=g_{\nabla^2}(x,y)$ can be taken advantage of
via symmetry reduction techniques developed
in~\cite{Symmetry_groups_Gatermann_Pablo}.

The issue of symmetry reduction aside, we would like to point out
that formulation \textbf{(c)} (which was the original definition
of sos-convexity) can be significantly more efficient than the
other two conditions. The reason is that the polynomial
$g_{\nabla^2}(x,y)$ is always quadratic and homogeneous in $y$ and
of degree $d-2$ in $x$. This makes $g_{\nabla^2}(x,y)$ much more
sparse than $g_\nabla(x,y)$ and $g_{\nabla^2}(x,y)$, which have
degree $d$ both in $x$ and in $y$. Furthermore, because of the
special bipartite structure of $y^TH(x)y$, only monomials of the
form \aaa{$x^\alpha y_i$ (i.e., linear in $y$)} will appear in the
vector of monomials (\ref{eq:monomials}). This in turn reduces the
size of the Gram matrix, and hence the size of the SDP. It is
perhaps not too surprising that the characterization of convexity
based on the Hessian matrix is a more efficient condition to
check. \aaa{After all, at a given point $x$, the property of
having nonnegative curvature in every direction is a local
condition, whereas characterizations \textbf{(a)} and \textbf{(b)}
both involve global conditions.}


\begin{remark}
There has been yet another proposal for an sos relaxation for
convexity of polynomials in~\cite{Chesi_Hung_journal}. However, we
have shown in~\cite{AAA_PP_CDC10_algeb_convex} that the condition
in~\cite{Chesi_Hung_journal} is at least as conservative as the
three conditions in
Theorem~\ref{thm:sos.convexity.3.equivalent.defs} and also
significantly more expensive to check.
\end{remark}

\begin{remark}\label{rmk:sos-convexity.restriction}
Just like convexity, the property of sos-convexity is preserved
under restrictions to affine subspaces. This is perhaps most
directly seen through characterization \textbf{(a)} of
sos-convexity in
Theorem~\ref{thm:sos.convexity.3.equivalent.defs}, by also noting
that sum of squares is preserved under restrictions. Unlike
convexity however, if a polynomial is sos-convex on every line (or
even on every proper affine subspace), this does not imply that
the polynomial is sos-convex.
\end{remark}

As an application of
Theorem~\ref{thm:sos.convexity.3.equivalent.defs}, we use our new
characterization of sos-convexity to give a short proof of an
interesting lemma of Helton and Nie.


\begin{lemma}\label{lem:helton.nie.sos-convex.then.sos}\emph{(Helton and Nie~\cite[Lemma 8]{Helton_Nie_SDP_repres_2})}.
Every sos-convex form is sos.
\end{lemma}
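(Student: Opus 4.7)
The plan is to exploit the equivalence established in Theorem~\ref{thm:sos.convexity.3.equivalent.defs}, in particular characterization \textbf{(b)}: sos-convexity of $p$ is equivalent to the polynomial
\[
g_\nabla(x,y)=p(y)-p(x)-\nabla p(x)^T(y-x)
\]
being a sum of squares in the $2n$ variables $(x,y)$. The idea is then to extract $p(y)$ itself from $g_\nabla$ by a judicious substitution.

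The key observation is that, because $p$ is a form of degree $d\geq 2$, one has $p(0)=0$ and $\nabla p(0)=0$ (the entries of $\nabla p$ are forms of degree $d-1\geq 1$). Substituting $x=0$ into $g_\nabla$ therefore collapses it to
\[
g_\nabla(0,y)=p(y).
\]
Since restricting an sos polynomial to an affine subspace preserves the sos property (if $g_\nabla=\sum_i q_i^2$, then $g_\nabla(0,y)=\sum_i q_i(0,y)^2$), it follows immediately that $p(y)$ is sos. This handles the substantive case $d\geq 2$.

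The degenerate low-degree cases are trivial: forms of degree $0$ that are sos-convex are constants with sos-matrix Hessian $0$ (the statement needs $p\geq 0$ to be meaningful here, which the paper tacitly assumes via the usual focus on $d\geq 2$), and for odd $d\geq 3$ any convex form is identically zero (as $t\mapsto p(tx_0)=t^d p(x_0)$ cannot be convex on $\mathbb{R}$ for odd $d\geq 3$ unless $p(x_0)=0$), hence trivially sos. I do not foresee any real obstacle: the whole argument is essentially a one-line substitution, and the work has already been done in proving Theorem~\ref{thm:sos.convexity.3.equivalent.defs}. The only subtlety worth mentioning is the use of homogeneity to kill the constant and gradient terms at the origin, which is exactly what distinguishes forms from general polynomials and makes the lemma fail for inhomogeneous sos-convex polynomials.
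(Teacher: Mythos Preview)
Your proof is correct and follows essentially the same approach as the paper: invoke Theorem~\ref{thm:sos.convexity.3.equivalent.defs}, then use homogeneity to set one of the two vector variables to the origin and recover (a positive multiple of) $p$ as a restriction of an sos polynomial. The only cosmetic difference is that the paper uses characterization \textbf{(a)} and sets $y=0$ in $g_{1/2}$, obtaining $\bigl(\tfrac{1}{2}-(\tfrac{1}{2})^d\bigr)p(x)$, whereas you use characterization \textbf{(b)} and set $x=0$ in $g_\nabla$, which has the mild advantage of yielding $p(y)$ directly without an extra scalar factor.
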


\begin{proof}
Let $p$ be an sos-convex form of degree $d$. We know from
Theorem~\ref{thm:sos.convexity.3.equivalent.defs} that
sos-convexity of $p$ is equivalent to the polynomial
$g_{\frac{1}{2}}(x,y)=\textstyle{\frac{1}{2}
p(x)}+\textstyle{\frac{1}{2}}p(y)-p\left(\textstyle{\frac{1}{2}}
x+\textstyle{\frac{1}{2}}y\right)$
being sos. But since sos is preserved under restrictions and
$p(0)=0$, this implies that
$$g_{\frac{1}{2}}(x,0)=\textstyle{\frac{1}{2}
p(x)}-p(\frac{1}{2}x)=\left(\frac{1}{2}-(\frac{1}{2})^d
\right)p(x)$$ is sos.
\end{proof}

Note that the same argument also shows that convex forms are psd.


\section{Some constructions of convex but not sos-convex
polynomials}\label{sec:first.examples} It is natural to ask
whether sos-convexity is not only a sufficient condition for
convexity of polynomials but also a necessary one. In other words,
could it be the case that if the Hessian of a polynomial is
positive semidefinite, then it must factor? To give a negative
answer to this question, one has to prove existence of a convex
polynomial that is not sos-convex, i.e, a polynomial $p$ for which
one (and hence all) of the three polynomials $g_{\frac{1}{2}},
g_\nabla,$ and $g_{\nabla^2}$ in
(\ref{eq:defn.g_lambda.g_grad.g_grad2}) are psd but not sos. Note
that existence of psd but not sos polynomials does not imply
existence of convex but not sos-convex polynomials on its own. The
reason is that the polynomials $g_{\frac{1}{2}}, g_\nabla,$ and
$g_{\nabla^2}$ all possess a very special
structure.\footnote{There are many situations where requiring a
specific structure on polynomials makes psd equivalent to sos. As
an example, we know that there are forms in
$P_{4,4}\setminus\Sigma_{4,4}$. However, if we require the forms
to have only even monomials, then all such nonnegative forms in 4
variables and degree 4 are sums of
squares~\cite{Even_quartics_4vars_sos}.} For example, $y^TH(x)y$
has the structure of being quadratic in $y$ and a Hessian in $x$.
(Not every polynomial matrix is a valid Hessian.) The Motzkin or
the Robinson polynomials in (\ref{eq:Motzkin.form}) and
(\ref{eq:Robinston.form}) for example are clearly not of this
structure.

In an earlier paper, we presented the first example of a convex
but not sos-convex
polynomial~\cite{AAA_PP_not_sos_convex_journal},\cite{AAA_PP_CDC09_HessianNotFactor}\footnote{Assuming
P$\neq$NP, and given the NP-hardness of deciding polynomial
convexity~\cite{NPhard_Convexity_arxiv}, one would expect to see
convex polynomials that are not sos-convex. However, our first
example in~\cite{AAA_PP_not_sos_convex_journal} appeared before
the proof of NP-hardness~\cite{NPhard_Convexity_arxiv}. Moreover,
from complexity considerations, even assuming P$\neq$NP,  one
cannot conclude existence of convex but not sos-convex polynomials
for any finite value of the number of variables $n$.}:
\begin{equation}\label{eq:first.convex.not.sos.convex}
\begin{array}{rlll}
p(x_1,x_2,x_3)&=&32x_1^8+118x_1^6x_2^2+40x_1^6x_3^2+25x_1^4x_2^4-43x_1^4x_2^2x_3^2-35x_1^4x_3^4+3x_1^2x_2^4x_3^2
\\
\\ \quad&\
&-16x_1^2x_2^2x_3^4+24x_1^2x_3^6+16x_2^8+44x_2^6x_3^2+70x_2^4x_3^4+60x_2^2x_3^6+30x_3^8.
\end{array}
\end{equation}
As we will see later in this paper, this form which lives in
$C_{3,8}\setminus\Sigma C_{3,8}$ turns out to be an example in the
smallest possible number of variables but not in the smallest
degree. We next present another example of a convex but not
sos-convex form that has not been previously in print. The example
is in $C_{6,4}\setminus\Sigma C_{6,4}$ and by contrast to the
previous example, it will turn out to be minimal in the degree but
not in the number of variables. What is nice about this example is
that unlike the other examples in this paper it has not been
derived with the assistance of a computer and semidefinite
programming:

\begin{equation}\label{eq:clean.quartic.convex.not.sos.convex}
\begin{array}{rlll}
q(x_1,\ldots,x_6)&=&x_1^4+x_2^4+x_3^4+x_4^4+x_5^4+x_6^4 \\ \\
     \quad&\
&+2(x_1^2x_2^2+x_1^2x_3^2+x_2^2x_3^2
+x_4^2x_5^2+x_4^2x_6^2+x_5^2x_6^2)\\ \\
\quad&\ &     +\frac{1}{2}(x_1^2x_4^2+x_2^2x_5^2+x_3^2x_6^2)+
x_1^2x_6^2+x_2^2x_4^2+x_3^2x_5^2 \\ \\\quad&\ &
-(x_1x_2x_4x_5+x_1x_3x_4x_6+x_2x_3x_5x_6).
\end{array}
\end{equation}
The proof that this polynomial is convex but not sos-convex can be
extracted from~\cite[Thm. 2.3 and Thm.
2.5]{NPhard_Convexity_arxiv}. In there, a general procedure is
described for producing convex but not sos-convex quartic forms
from \emph{any} example of a psd but not sos biquadratic form. The
biquadratic form that has led to the form above is that of Choi
in~\cite{Choi_Biquadratic}.

Also note that the example in
(\ref{eq:clean.quartic.convex.not.sos.convex}) shows that convex
forms that possess strong symmetry properties can still fail to be
sos-convex. The symmetries in this form are inherited from the
rich symmetry structure of the biquadratic form of Choi
(see~\cite{Symmetry_groups_Gatermann_Pablo}). In general,
symmetries are of interest in the study of positive semidefinite
and sums of squares polynomials because the gap between psd and
sos can often behave very differently depending on the symmetry
properties; see e.g.~\cite{Symmetric_quartics_sos}.


\section{Characterization of the gap between convexity and
sos-convexity}\label{sec:full.characterization}

Now that we know there exist convex polynomials that are not
sos-convex, our final and main goal is to give a complete
characterization of the degrees and dimensions in which such
polynomials can exist. This is achieved in the next theorem.

\begin{theorem}\label{thm:full.charac.polys}
$\tilde{\Sigma C}_{n,d}=\tilde{C}_{n,d}$ if and only if $n=1$ or
$d=2$ or $(n,d)=(2,4)$.
\end{theorem}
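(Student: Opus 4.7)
The plan mirrors Hilbert's trichotomy: establish equality of $\tilde{C}_{n,d}$ and $\tilde{\Sigma C}_{n,d}$ in the three listed cases, and exhibit a member of $\tilde{C}_{n,d}\setminus\tilde{\Sigma C}_{n,d}$ in every other case. I would work throughout with characterization \textbf{(c)} of Theorem~\ref{thm:sos.convexity.3.equivalent.defs}, so that convexity of $p$ becomes nonnegativity of $g_{\nabla^2}(x,y)=y^{T}H(x)y$ and sos-convexity becomes the statement that $g_{\nabla^2}$ is sos.

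For the equality direction I would dispatch the three cases in turn. When $n=1$, $H(x)$ is the scalar polynomial $p''(x)$, so convexity of $p$ reduces to nonnegativity of $p''$, and Hilbert's theorem in one variable ($\tilde{\Sigma}_{1,d-2}=\tilde{P}_{1,d-2}$) supplies the sos decomposition. When $d=2$, $H(x)$ is a constant symmetric matrix; if psd it factors as $V^{T}V$, so $y^{T}Hy=\lVert Vy\rVert^{2}$. The case $(n,d)=(2,4)$ is the interesting one: here $g_{\nabla^2}(x,y)$ is a biquadratic form on $\mathbb{R}^{2}\times\mathbb{R}^{2}$ (homogeneous of degree two in $x$ and in $y$), and I would invoke the classical fact that every psd biquadratic form in $(2,2)$ variables is sos. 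Alternatively, one can pass to the homogenized form in three variables and appeal to Hilbert's ternary quartic theorem, $P_{3,4}=\Sigma_{3,4}$, to force sos-convexity.

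For the gap direction, the plan is to first produce explicit convex but not sos-convex polynomials at the two minimal pairs $(2,6)$ and $(3,4)$ (the content of Theorem~\ref{thm:minimal.2.6.and.3.6} and Theorem~\ref{thm:minimal.3.4.and.4.4}), and then propagate them to every remaining $(n,d)$. Propagation in the number of variables is straightforward: given $p\in\tilde{C}_{n_0,d_0}\setminus\tilde{\Sigma C}_{n_0,d_0}$, the polynomial
\[
\tilde{p}(x_1,\dots,x_n)\ =\ p(x_1,\dots,x_{n_0})\ +\ \sum_{i=n_0+1}^{n}x_i^{d_0}
\]
is convex of degree $d_0$ in $n$ variables, and its restriction to $x_{n_0+1}=\cdots=x_n=0$ is $p$. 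Since sos-convexity is preserved under restriction to affine subspaces (Remark~\ref{rmk:sos-convexity.restriction}), non-sos-convexity of the restriction forces non-sos-convexity of $\tilde{p}$. Propagation in degree is more delicate, because simply adding an sos-convex polynomial may wash out the obstruction; I would therefore lift to the form setting, use the construction alluded to in Theorem~\ref{thm:conv_not_sos_conv_forms_n3d} (which builds a convex but not sos-convex form of degree $d+2$ from any psd-but-not-sos form of degree $d$), and then dehomogenize while keeping the restriction witness intact.

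The main obstacles I expect are two. First, producing the minimal examples at $(2,6)$ and $(3,4)$: one must write down a polynomial whose Hessian is everywhere psd yet admits no sum-of-squares factorization, which typically demands a careful semidefinite-programming search together with a dual infeasibility certificate (a linear functional separating the Hessian from the sos-matrix cone). Second, the $(2,4)$ equality case rests either on the classical biquadratic $(2,2)$ theorem or on Hilbert's ternary quartic result combined with a nontrivial reduction; choosing the cleanest reduction and verifying that it actually transfers sos-convexity (not merely sos-ness of the form) is itself a genuine modeling step.
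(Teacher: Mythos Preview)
Your overall architecture matches the paper's: split into the three equality cases and the gap cases, handle the latter via the minimal examples at $(2,6)$ and $(3,4)$, propagate in $n$ by adding pure powers, and propagate in degree via the form construction of Theorem~\ref{thm:conv_not_sos_conv_forms_n3d} followed by dehomogenization (the paper does exactly this, with Corollary~\ref{cor:bivariate.polys.8.10.12...} supplying the bivariate polynomials of degree $\ge 8$).

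The one genuine slip is in the $(2,4)$ equality case. Since $p$ is a bivariate quartic \emph{polynomial}, the entries of $H(x)$ are quadratic polynomials, not quadratic forms, so $y^{T}H(x)y$ is \emph{not} homogeneous in $x$ and is therefore not a biquadratic form on $\mathbb{R}^2\times\mathbb{R}^2$; the $(2,2)$ biquadratic result does not apply as stated. The paper's remedy is to homogenize the entries of $H$ with a third variable, obtaining $\bar H(x_1,x_2,x_3)$, and then apply the biform theorem (Theorem~\ref{thm:biform.thm}) with $(u_1,u_2)=(y_1,y_2)$ and $(v_1,v_2,v_3)=(x_1,x_2,x_3)$, i.e.\ with $y$ as the two-variable side and the homogenized $x$ as the side in which the form is quadratic; setting $x_3=1$ then gives that $y^TH(x)y$ is sos. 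Your proposed alternative through $P_{3,4}=\Sigma_{3,4}$ does not work directly either: homogenizing $p$ itself need not preserve convexity (Remark~\ref{rmk:difficulty.homogz.dehomogz}), and invoking $\Sigma C_{3,4}=C_{3,4}$ would be circular since that is the separate, harder result of~\cite{AAA_GB_PP_Convex_ternary_quartics}.
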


We would also like to have such a characterization for homogeneous
polynomials. Although convexity is a property that is in some
sense more meaningful for nonhomogeneous polynomials than for
forms, one motivation for studying convexity of forms is in their
relation to norms~\cite{Blenders_Reznick}. Also, in view of the
fact that we have a characterization of the gap between
nonnegativity and sums of squares both for polynomials and for
forms, it is very natural to inquire the same result for convexity
and sos-convexity. The next theorem presents this characterization
for forms.

\begin{theorem}\label{thm:full.charac.forms}
$\Sigma C_{n,d}=C_{n,d}$ if and only if $n=2$ or $d=2$ or
$(n,d)=(3,4)$.
\end{theorem}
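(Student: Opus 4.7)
The proof has two directions. For the ``if'' direction I would treat the three cases $d=2$, $n=2$, and $(n,d)=(3,4)$ separately. The case $d=2$ is immediate: the Hessian of a convex quadratic form is a constant positive semidefinite matrix, and Cholesky $H = LL^T$ furnishes the sos-matrix certificate of Definition~\ref{def:sos-matrix} at once. The case $(n,d)=(3,4)$ I would simply cite from the joint work with Blekherman noted in the introduction. The nontrivial case is $n=2$: the Hessian $H(x_1,x_2)$ is a $2\times 2$ symmetric polynomial matrix whose entries are forms in $\mathbb{R}[x_1,x_2]$ of degree $d-2$, positive semidefinite pointwise by convexity. The claim reduces to showing that every such bivariate $2\times 2$ PSD matrix of forms is an sos-matrix. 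I would prove this by dehomogenizing in $x_2$ to reduce to the univariate case, invoking the classical fact that every positive semidefinite univariate polynomial matrix factors as $V^T(t)V(t)$ with $V(t)$ polynomial, and then rehomogenizing; a degree-balancing argument ensures that the entries of the rehomogenized $V$ can be chosen to be forms of the correct degree so that $H(x)=V^T(x)V(x)$ on the nose.

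For the ``only if'' direction, I must exhibit, for every pair $(n,d)$ with $n\geq 3$, $d\geq 4$ even, and $(n,d)\neq(3,4)$, a form in $C_{n,d}\setminus\Sigma C_{n,d}$. The argument proceeds in two stages. First, settle the two minimal pairs $(n,d)=(3,6)$ and $(n,d)=(4,4)$ via the explicit constructions of Theorem~\ref{thm:minimal.2.6.and.3.6} and Theorem~\ref{thm:minimal.3.4.and.4.4}: for each candidate form $p$, convexity is certified by producing an explicit sum-of-squares decomposition of the polynomial $y^TH(x)y$ (possibly after multiplying by a positive auxiliary form), while the failure of sos-convexity is certified by producing a linear functional on the space of forms of the correct shape and degree that is nonnegative on all sums of squares but strictly negative on $y^TH(x)y$. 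Second, to reach all remaining pairs, I would combine two moves. The degree-raising construction of Theorem~\ref{thm:conv_not_sos_conv_forms_n3d} produces an element of $C_{n,d+2}\setminus\Sigma C_{n,d+2}$ from any element of $P_{n,d}\setminus\Sigma_{n,d}$, whose existence in the ranges needed is guaranteed by Hilbert (Theorem~\ref{thm:Hilbert}). The variable-lifting map $p(x_1,\ldots,x_n)\mapsto p(x_1,\ldots,x_n)+x_{n+1}^d$ sends $(n,d)$ to $(n+1,d)$: it preserves convexity (the new Hessian is block diagonal with the old one and a $1\times 1$ block $d(d-1)x_{n+1}^{d-2}\geq 0$) and preserves the failure of sos-convexity, since if $y^TH_{\mathrm{new}}(x)y$ were sos then setting $y_{n+1}=0$ would exhibit the original $y^TH(x)y$ as sos too. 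Starting from $(3,6)$ and iterating degree-raising covers all $(3,d)$ with $d\geq 6$ even; variable-lifting $(4,4)$ yields $(n,4)$ for every $n\geq 4$; and iterating degree-raising from each $(n,4)$ covers all $(n,d)$ with $n\geq 4$, $d\geq 4$ even.

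The principal obstacle in the ``if'' direction is the $n=2$ case, which requires a genuine matrix analogue of Hilbert's bivariate PSD-equals-SOS theorem; a direct entrywise Cholesky attempt fails because square roots of sos bivariate forms need not themselves be polynomials, so the reduction through univariate polynomial matrix factorization followed by careful rehomogenization is the crux. In the ``only if'' direction, essentially all of the difficulty is concentrated in producing and certifying the two minimal examples for $(3,6)$ and $(4,4)$: the relevant certificates are typically obtained via semidefinite programming and then made explicit by extracting rational sos decompositions for convexity and rational dual separators for non-sos-convexity, after which the induction on $(n,d)$ via degree-raising and variable-lifting is essentially routine.
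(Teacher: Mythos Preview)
Your proposal is essentially correct and tracks the paper's overall architecture, but it diverges from the paper in one substantive place and one organizational place.

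For the case $n=2$ in the ``if'' direction, the paper does \emph{not} go through univariate matrix spectral factorization and rehomogenization. Instead it invokes the biform theorem (Theorem~\ref{thm:biform.thm}): the form $y^TH(x)y$ is quadratic in $y=(y_1,y_2)$ and a form of degree $d-2$ in the two variables $x=(x_1,x_2)$, and the biform theorem states that any such psd biform is automatically sos. This is a one-line application once the theorem is quoted. Your route---dehomogenize to a univariate PSD matrix polynomial, factor it as $V^T(t)V(t)$ by matrix Fej\'er--Riesz, then rehomogenize---is a valid alternative, and in fact is essentially one way to prove the relevant special case of the biform theorem; but it carries exactly the degree-bookkeeping burden you flag, whereas the biform theorem absorbs that work into a single citation.

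For the ``only if'' coverage, the paper organizes the induction more economically than you do. It applies the degree-raising construction (Theorem~\ref{thm:conv_not_sos_conv_forms_n3d}) \emph{only} in the ternary case $n=3$, producing forms in $C_{3,d}\setminus\Sigma C_{3,d}$ for all even $d\geq 8$ from psd-not-sos ternary forms of degree $d-2$, and then reaches every higher $n$ purely by variable-lifting from the column $(3,d)$, $d\geq 6$, and from $(4,4)$. Your plan instead invokes degree-raising for every $n\geq 4$; this is fine in principle (the paper remarks that the construction works for general $n$), but it is not how the paper proceeds. Also, your phrase ``iterating degree-raising from each $(n,4)$'' is slightly off: Theorem~\ref{thm:conv_not_sos_conv_forms_n3d} does not take a convex-but-not-sos-convex form as input, so there is nothing to iterate---one needs a fresh psd-not-sos form in $P_{n,d-2}\setminus\Sigma_{n,d-2}$ for each target degree $d$, and Hilbert's theorem is what supplies those.
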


The result $\Sigma C_{3,4}=C_{3,4}$ of this theorem is joint work
with G. Blekherman and is to be presented in full detail
in~\cite{AAA_GB_PP_Convex_ternary_quartics}. The remainder of this
paper is solely devoted to the proof of
Theorem~\ref{thm:full.charac.polys} and the proof of
Theorem~\ref{thm:full.charac.forms} except for the case
$(n,d)=(3,4)$. Before we present these proofs, we shall make two
important remarks.

\begin{remark}\label{rmk:difficulty.homogz.dehomogz} {\bf Difficulty with homogenization and dehomogenization.}
Recall from Subsection~\ref{subsec:nonnegativity.sos.basics} and
Theorem~\ref{thm:Hilbert} that characterizing the gap between
nonnegativity and sum of squares for polynomials is equivalent to
accomplishing this task for forms. Unfortunately, the situation is
more complicated for convexity and sos-convexity and that is the
reason why we are presenting Theorems~\ref{thm:full.charac.polys}
and~\ref{thm:full.charac.forms} as separate theorems. The
difficulty arises from the fact that unlike nonnegativity and sum
of squares, convexity and sos-convexity are not always preserved
under homogenization. (Or equivalently, the properties of being
not convex and not sos-convex are not preserved under
dehomogenization.) In fact, any convex polynomial that is not psd
will no longer be convex after homogenization. This is because
convex forms are psd but the homogenization of a non-psd
polynomial is a non-psd form. Even if a convex polynomial is psd,
its homogenization may not be convex. For example the univariate
polynomial $10x_1^4-5x_1+2$ is convex and psd, but its
homogenization $10x_1^4-5x_1x_2^3+2x_2^4$ is not
convex.\footnote{What is true however is that a nonnegative form
of degree $d$ is convex if and only if the $d$-th root of its
dehomogenization is a convex function~\cite[Prop.
4.4]{Blenders_Reznick}.} To observe the same phenomenon for
sos-convexity, consider the trivariate form $p$ in
(\ref{eq:first.convex.not.sos.convex}) which is convex but not
sos-convex and define $\tilde{p}(x_2,x_3)=p(1,x_2,x_3)$. Then, one
can check that $\tilde{p}$ is sos-convex (i.e., its $2\times 2$
Hessian factors) even though its homogenization which is $p$ is
not sos-convex~\cite{AAA_PP_not_sos_convex_journal}.
\end{remark}

\begin{remark}\label{rmk:resemb.to.Hilbert} {\bf Resemblance to the result of Hilbert.} The reader
may have noticed from the statements of Theorem~\ref{thm:Hilbert}
and Theorems~\ref{thm:full.charac.polys}
and~\ref{thm:full.charac.forms} that the cases where convex
polynomials (forms) are sos-convex are exactly the same cases
where nonnegative polynomials are sums of squares! We shall
emphasize that as far as we can tell, our results do not follow
(except in the simplest cases) from Hilbert's result stated in
Theorem~\ref{thm:Hilbert}. Note that the question of convexity or
sos-convexity of a polynomial $p(x)$ in $n$ variables and degree
$d$ is about the polynomials $g_{\frac{1}{2}}(x,y),
g_\nabla(x,y),$ or $g_{\nabla^2}(x,y)$ defined in
(\ref{eq:defn.g_lambda.g_grad.g_grad2}) being psd or sos. Even
though these polynomials still have degree $d$, it is important to
keep in mind that they are polynomials \emph{in $2n$ variables}.
Therefore, there is no direct correspondence with the
characterization of Hilbert. To make this more explicit, let us
consider for example one particular claim of
Theorem~\ref{thm:full.charac.forms}: $\Sigma C_{2,4}=C_{2,4}$. For
a form $p$ in 2 variables and degree 4, the polynomials
$g_{\frac{1}{2}}, g_\nabla,$ and $g_{\nabla^2}$ will be forms in 4
variables and degree 4. We know from Hilbert's result that in this
situation psd but not sos forms do in fact exist. However, for the
forms in 4 variables and degree 4 that have the special structure
of $g_{\frac{1}{2}}, g_\nabla,$ or $g_{\nabla^2}$, psd turns out
to be equivalent to sos.
%
\end{remark}

The proofs of Theorems~\ref{thm:full.charac.polys}
and~\ref{thm:full.charac.forms} are broken into the next two
subsections. In Subsection~\ref{subsec:proof.equal.cases}, we
provide the proofs for the cases where convexity and sos-convexity
are equivalent. Then in
Subsection~\ref{subsec:proof.non.equal.cases}, we prove that in
all other cases there exist convex polynomials that are not
sos-convex.

\subsection{Proofs of Theorems~\ref{thm:full.charac.polys} and~\ref{thm:full.charac.forms}: cases where $\tilde{\Sigma C}_{n,d}=\tilde{C}_{n,d}, \Sigma
C_{n,d}=C_{n,d}$}\label{subsec:proof.equal.cases}

When proving equivalence of convexity and sos-convexity, it turns
out to be more convenient to work with the second order
characterization of sos-convexity, i.e., with the form
$g_{\nabla^2}(x,y)=y^TH(x)y$ in
(\ref{eq:defn.g_lambda.g_grad.g_grad2}). The reason for this is
that this form is always quadratic in $y$, and this allows us to
make use of the following key theorem, henceforth referred to as
the ``biform theorem''.

\begin{theorem}[e.g.~\cite{CLRrealzeros}]\label{thm:biform.thm}
Let $f\mathrel{\mathop:}=f(u_1,u_2,v_1,\ldots,v_m)$ be a form in
the variables $u\mathrel{\mathop:}=(u_1,u_2)^T$ and
$v\mathrel{\mathop:}=(v_1,,\ldots,v_m)^T$ that is a quadratic form
in $v$ for fixed $u$ and a form (of \aaa{any} degree) in $u$ for
fixed $v$. Then $f$ is psd if and only if it is sos.\footnote{Note
that the results $\Sigma_{2,d}=P_{2,d}$ and $\Sigma_{n,2}=P_{n,2}$
are both special cases of this theorem.}
\end{theorem}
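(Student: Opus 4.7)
The plan is to recast the statement as a matrix factorization question and reduce it to the univariate case, where a classical spectral factorization result applies.

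First I would write $f(u,v)=v^T A(u)v$ for a symmetric polynomial matrix $A(u)\in\mathbb{R}[u_1,u_2]^{m\times m}$ whose nonzero entries are binary forms in $u$ of a common degree $2N$ (even, since $f$ is a form of psd type, hence of even total degree $2N+2$). It is immediate that $f$ is psd on $\mathbb{R}^{m+2}$ if and only if $A(u)\succeq 0$ for every $u\in\mathbb{R}^2$, and it suffices to produce a factorization $A(u)=B(u)^T B(u)$ for some integer $s$ and some $B\in\mathbb{R}[u_1,u_2]^{s\times m}$ whose entries are binary forms of degree $N$. Such a factorization immediately yields $f(u,v)=\|B(u)v\|^2=\sum_{i=1}^s\bigl(\sum_{j=1}^m B_{ij}(u)v_j\bigr)^2$, an explicit sos decomposition of $f$ into squares of forms of degree $N+1$ in $(u,v)$.

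To obtain the factorization, I would dehomogenize by setting $u_2=1$ and consider $\tilde A(t):=A(t,1)\in\mathbb{R}[t]^{m\times m}$, which is psd for every $t\in\mathbb{R}$. I would then invoke the classical spectral factorization theorem for psd univariate symmetric polynomial matrices (ultimately resting on the elementary fact that a nonnegative univariate polynomial is a sum of two squares of polynomials, extended to the matrix setting) to write $\tilde A(t)=\tilde B(t)^T\tilde B(t)$ for some polynomial matrix $\tilde B$ whose entries have degree at most $N$. Rehomogenizing via $B(u_1,u_2):=u_2^N\tilde B(u_1/u_2)$ clears denominators and produces the desired binary-form factorization $A(u)=B(u)^T B(u)$; a quick check using the identity $u_2^{2N}\tilde A(u_1/u_2)=A(u_1,u_2)$ (valid by homogeneity of the entries of $A$) confirms this.

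The principal obstacle is the univariate polynomial matrix factorization, which although classical (going back to Jakubovic and related spectral-factorization theory) is not wholly elementary. A more self-contained alternative is induction on $m$: the scalar case is the sum-of-two-squares statement for univariate psd polynomials, and the inductive step can be handled by a Schur-complement-style reduction carefully arranged so that all divisions remain polynomial. A third route, in the spirit of Choi--Lam--Reznick, is a separating-hyperplane argument: using closedness of the sos cone (Robinson's theorem, already invoked elsewhere in this paper), any psd $f$ not in the sos cone would admit a separating linear functional whose Hankel/moment structure on $\mathbb{R}^2$ (where the relevant truncated binary moment problem is benign) would produce a point $u^*$ with $A(u^*)\not\succeq 0$, contradicting the psd hypothesis.
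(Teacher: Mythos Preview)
The paper does not actually prove this theorem; it is quoted as a known result from the literature (with pointers to~\cite{CLRrealzeros} and~\cite{SOS_KYP}, and specifically to~\cite[Sec.~7]{CLRrealzeros} for ``an elegant proof''). So there is no in-paper argument to compare against directly.

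Your argument is correct and in fact matches one of the standard routes to this result. Writing $f(u,v)=v^TA(u)v$, dehomogenizing in $u$ to a univariate psd polynomial matrix $\tilde A(t)$, invoking the classical spectral factorization $\tilde A(t)=\tilde B(t)^T\tilde B(t)$ (Jakubovi\v{c} et al.), and rehomogenizing is essentially the system-theoretic proof behind the reference~\cite{SOS_KYP}. The third route you sketch (separating functional plus the benign structure of bivariate moment problems) is closer in spirit to the argument in~\cite{CLRrealzeros}. Either works; the spectral-factorization approach is constructive and even yields the sharper conclusion that $m$ squares suffice (taking $\tilde B$ square), while the dual/extreme-ray approach is softer but generalizes more readily to the refinements discussed in~\cite{CLRrealzeros}.

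One small point worth making explicit in your write-up: the rehomogenization step $B(u_1,u_2):=u_2^N\tilde B(u_1/u_2)$ is only polynomial because the univariate factorization can be taken with entrywise degree bound $\deg\tilde B_{ij}\le N$. You state this, but it is exactly where the work in the matrix spectral factorization theorem lies, so when you flesh out the proof you should either cite a version that includes the degree bound or derive it (e.g., from the scalar case via your Schur-complement induction).
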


The biform theorem has been proven independently by several
authors. See~\cite{CLRrealzeros} and~\cite{SOS_KYP} for more
background on this theorem and in particular~\cite[Sec.
7]{CLRrealzeros} for an elegant proof and some refinements. We now
proceed with our proofs which will follow in a rather
straightforward manner from the biform theorem.

\begin{theorem}
$\tilde{\Sigma C}_{1,d}=\tilde{C}_{1,d}$ for all $d$. $\Sigma
C_{2,d}=C_{2,d}$ for all $d$.
\end{theorem}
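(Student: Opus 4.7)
The plan is to use the second-order characterization of sos-convexity from Theorem~\ref{thm:sos.convexity.3.equivalent.defs}(c), which reduces each claim to a question about the biform $g_{\nabla^2}(x,y)=y^TH(x)y$. In both cases the target statement becomes: this specific biform is psd if and only if it is sos. The univariate case follows from Hilbert's theorem in one variable, and the bivariate forms case follows directly from the biform theorem (Theorem~\ref{thm:biform.thm}), so no substantial obstacle is expected.

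First I would handle $\tilde{\Sigma C}_{1,d}=\tilde{C}_{1,d}$. For a univariate $p\in\mathbb{R}[x_1]$ of degree $d$, the Hessian $H(x_1)=p''(x_1)$ is a $1\times 1$ polynomial matrix, and $g_{\nabla^2}(x_1,y_1)=p''(x_1)\,y_1^2$. Convexity of $p$ is equivalent to $p''\geq 0$ on $\mathbb{R}$, while sos-convexity is equivalent to $p''(x_1)y_1^2$ being sos in $\mathbb{R}[x_1,y_1]$, which in turn is equivalent to $p''$ being sos as a univariate polynomial. Since $\tilde{\Sigma}_{1,d-2}=\tilde{P}_{1,d-2}$ by Hilbert's theorem, the two conditions coincide.

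Next I would treat $\Sigma C_{2,d}=C_{2,d}$. For a form $p\in\mathbb{R}[x_1,x_2]$ of degree $d$, the Hessian $H(x)$ is a $2\times 2$ matrix whose entries are forms of degree $d-2$ in $x=(x_1,x_2)$. Convexity is equivalent to $g_{\nabla^2}(x,y)=y^TH(x)y\geq 0$ for all $x,y\in\mathbb{R}^2$, whereas sos-convexity is equivalent to $g_{\nabla^2}(x,y)$ being sos in $\mathbb{R}[x,y]$. Observe that $g_{\nabla^2}$ is a form in the four variables $(x_1,x_2,y_1,y_2)$ which is quadratic in $y=(y_1,y_2)$ and a form in the two variables $x=(x_1,x_2)$ for each fixed $y$. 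This is exactly the hypothesis of the biform theorem with $u=x$ and $v=y$, so psd implies sos, yielding $C_{2,d}\subseteq \Sigma C_{2,d}$. The reverse inclusion is immediate.

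The main thing to verify carefully is that $g_{\nabla^2}$ really falls under the biform framework, namely that for each fixed $y$ it is genuinely a form in the two variables $u=x$ (not just a polynomial), which is clear since $H(x)$ has homogeneous entries of common degree $d-2$. There is no hard step here; the work has been absorbed into Theorem~\ref{thm:sos.convexity.3.equivalent.defs} and Theorem~\ref{thm:biform.thm}.
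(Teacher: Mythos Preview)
Your proof is correct and follows essentially the same route as the paper: the univariate case via $\tilde{\Sigma}_{1,d}=\tilde{P}_{1,d}$, and the bivariate forms case by applying the biform theorem to $y^TH(x)y$ with $u=x$ and $v=y$. The paper's argument is identical in structure and in the choice of $(u,v)$.
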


\begin{proof}
For a univariate polynomial, convexity means that the second
derivative, which is another univariate polynomial, is psd. Since
$\tilde{\Sigma}_{1,d}=\tilde{P}_{1,d}$, the second derivative must
be sos. Therefore, $\tilde{\Sigma C}_{1,d}=\tilde{C}_{1,d}$. To
prove $\Sigma C_{2,d}=C_{2,d}$, suppose we have a convex bivariate
form $p$ of degree $d$ in variables
$x\mathrel{\mathop:}=(x_1,x_2)^T$. The Hessian
$H\mathrel{\mathop:}=H(x)$ of $p$ is a $2\times 2$ matrix whose
entries are forms of degree $d-2$. If we let
$y\mathrel{\mathop:}=(y_1,y_2)^T$, convexity of $p$ implies that
the form $y^TH(x)y$ is psd. Since $y^TH(x)y$ meets the
requirements of the biform theorem above with
$(u_1,u_2)=(x_1,x_2)$ and $(v_1,v_2)=(y_1,y_2)$, it follows that
$y^TH(x)y$ is sos. Hence, $p$ is sos-convex.
\end{proof}

\begin{theorem}
$\tilde{\Sigma C}_{n,2}=\tilde{C}_{n,2}$ for all $n$. $\Sigma
C_{n,2}=C_{n,2}$ for all $n$.
\end{theorem}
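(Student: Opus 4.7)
The plan is to observe that for degree-$2$ polynomials (and quadratic forms), the Hessian $H(x)$ is a \emph{constant} symmetric matrix, so both convexity and sos-convexity reduce to the same purely linear-algebraic condition.

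First, I would write a generic element of $\tilde{C}_{n,2}$ as $p(x) = x^T A x + b^T x + c$ with $A = A^T \in \mathbb{R}^{n\times n}$, $b \in \mathbb{R}^n$, $c \in \mathbb{R}$. The Hessian is $H(x) = 2A$, independent of $x$. By Theorem~\ref{thm:classical.first.2nd.order.charac.}, $p$ is convex iff $y^T H(x) y = 2 y^T A y \geq 0$ for all $y$, i.e., iff $A$ is positive semidefinite.

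Next, I would invoke the second-order characterization of sos-convexity from Theorem~\ref{thm:sos.convexity.3.equivalent.defs}: $p$ is sos-convex iff the form $g_{\nabla^2}(x,y) = 2 y^T A y$ is a sum of squares in $\mathbb{R}[x;y]$. Since this expression does not involve $x$ at all, it is a pure quadratic form in $y$. But every psd quadratic form is sos: if $A \succeq 0$, the spectral decomposition gives $A = \sum_i \lambda_i v_i v_i^T$ with $\lambda_i \geq 0$, hence
\begin{equation}\nonumber
2 y^T A y = \sum_i \bigl(\sqrt{2\lambda_i}\, v_i^T y\bigr)^2,
\end{equation}
which is manifestly sos. (Equivalently, this is the content of $\tilde{\Sigma}_{n,2} = \tilde{P}_{n,2}$, or a trivial special case of the biform theorem.) Thus convexity of $p$ implies sos-convexity, and the reverse inclusion $\tilde{\Sigma C}_{n,2} \subseteq \tilde{C}_{n,2}$ is already known. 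This gives $\tilde{\Sigma C}_{n,2} = \tilde{C}_{n,2}$.

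For the homogeneous statement $\Sigma C_{n,2} = C_{n,2}$, the proof is identical with $b = 0$ and $c = 0$; in fact quadratic forms trivially homogenize and dehomogenize without difficulty since the pathologies described in Remark~\ref{rmk:difficulty.homogz.dehomogz} only arise in higher degrees. There is essentially no obstacle to this argument — the main content is simply recognizing that constancy of the Hessian collapses sos-convexity to the trivial fact that psd quadratic forms are sums of squares.
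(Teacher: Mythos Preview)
Your proposal is correct and follows essentially the same approach as the paper: both observe that the Hessian of a quadratic polynomial is a constant symmetric matrix, so convexity reduces to $y^TQy$ being psd, and since $\Sigma_{n,2}=P_{n,2}$ this quadratic form is automatically sos. The paper's proof is slightly more terse (invoking $\Sigma_{n,2}=P_{n,2}$ directly rather than writing out the spectral decomposition), but the argument is the same.
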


\begin{proof}
Let $x\mathrel{\mathop:}=(x_1,\ldots,x_n)^T$ and
$y\mathrel{\mathop:}=(y_1,\ldots,y_n)^T$. Let
$p(x)=\frac{1}{2}x^TQx+b^Tx+c$ be a quadratic polynomial. The
Hessian of $p$ in this case is the constant symmetric matrix $Q$.
Convexity of $p$ implies that $y^TQy$ is psd. But since
$\Sigma_{n,2}=P_{n,2}$, $y^TQy$ must be sos. Hence, $p$ is
sos-convex. The proof of $\Sigma C_{n,2}=C_{n,2}$ is identical.
\end{proof}

\begin{theorem}
$\tilde{\Sigma C}_{2,4}=\tilde{C}_{2,4}$.
\end{theorem}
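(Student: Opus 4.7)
The plan is to reduce the claim to the biform theorem (Theorem~\ref{thm:biform.thm}) via an appropriate homogenization of the Hessian-based certificate. Let $p(x_1,x_2)$ be a convex polynomial of degree at most $4$, and let $H(x)$ denote its Hessian, whose entries are polynomials in $(x_1,x_2)$ of degree at most $2$. By characterization \textbf{(c)} of Theorem~\ref{thm:sos.convexity.3.equivalent.defs}, it suffices to show that the four-variable polynomial
$$q(x_1,x_2,y_1,y_2) \mathrel{\mathop:}= y^T H(x)\, y$$
is a sum of squares; convexity of $p$ already guarantees that $q$ is psd.

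The difficulty is that $q$ is not homogeneous in $x$, so Theorem~\ref{thm:biform.thm} does not apply directly. To remedy this, I would homogenize $q$ in the $x$-variables by introducing an auxiliary variable $x_3$ and setting
$$\tilde{q}(x_1,x_2,x_3,y_1,y_2) \mathrel{\mathop:}= x_3^{2}\, q\!\left(x_1/x_3,\, x_2/x_3,\, y_1,\, y_2\right).$$
Since $q$ has degree at most $2$ in $x$ and is homogeneous of degree $2$ in $y$, a routine check shows that $\tilde{q}$ is a genuine form of degree $4$ in the five variables $(x_1,x_2,x_3,y_1,y_2)$: for each fixed $(x_1,x_2,x_3)$ it is a quadratic form in $(y_1,y_2)$, and for each fixed $(y_1,y_2)$ it is homogeneous of degree $2$ in $(x_1,x_2,x_3)$. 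This is exactly the bipartite structure required by Theorem~\ref{thm:biform.thm}, with two-variable block $u \mathrel{\mathop:}= (y_1,y_2)$ and $m=3$-variable block $v \mathrel{\mathop:}= (x_1,x_2,x_3)$.

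Next I would verify that $\tilde{q}$ is psd. On the open dense set $\{x_3 \neq 0\} \subset \mathbb{R}^5$, the defining identity together with psd-ness of $q$ gives $\tilde{q} \geq 0$; by continuity of polynomials, $\tilde{q} \geq 0$ everywhere. The biform theorem then yields that $\tilde{q}$ is sos. Since sums of squares are preserved under polynomial substitutions, setting $x_3 = 1$ recovers $q(x_1,x_2,y_1,y_2)$ and shows that it is sos, whence $p$ is sos-convex by Theorem~\ref{thm:sos.convexity.3.equivalent.defs}(c).

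The main obstacle to avoid is exactly the pitfall flagged in Remark~\ref{rmk:difficulty.homogz.dehomogz}: one cannot simply homogenize the polynomial $p$ itself, because convexity (and sos-convexity) need not be preserved under homogenization. The key move is to homogenize instead the \emph{scalar Hessian certificate} $y^T H(x) y$, which, being quadratic and homogeneous in $y$ to begin with, tolerates the introduction of $x_3$ without destroying the bipartite structure that Theorem~\ref{thm:biform.thm} requires.
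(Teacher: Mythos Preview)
Your proposal is correct and follows essentially the same argument as the paper: homogenize $y^{T}H(x)y$ in the $x$-variables by introducing $x_3$, apply the biform theorem with $u=(y_1,y_2)$ and $v=(x_1,x_2,x_3)$, and then dehomogenize by setting $x_3=1$. The paper phrases the homogenization step as homogenizing each entry of $H$ to obtain $\bar{H}(x_1,x_2,x_3)$, but this is the same operation as your $\tilde{q}$, and your added remark about why one homogenizes the Hessian certificate rather than $p$ itself is a helpful clarification.
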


\begin{proof}
Let $p(x)\mathrel{\mathop:}=p(x_1,x_2)$ be a convex bivariate
quartic polynomial. Let $H\mathrel{\mathop:}=H(x)$ denote the
Hessian of $p$ and let $y\mathrel{\mathop:}=(y_1,y_2)^T$. Note
that $H(x)$ is a $2\times 2$ matrix whose entries are (not
necessarily homogeneous) quadratic polynomials. Since $p$ is
convex, $y^TH(x)y$ is psd. Let $\bar{H}(x_1,x_2,x_3)$ be a
$2\times 2$ matrix whose entries are obtained by homogenizing the
entries of $H$. It is easy to see that $y^T\bar{H}(x_1,x_2,x_3)y$
is then the form obtained by homogenizing $y^TH(x)y$ and is
therefore psd. Now we can employ the biform theorem
(Theorem~\ref{thm:biform.thm}) with $(u_1,u_2)=(y_1,y_2)$ and
$(v_1,v_2,v_3)=(x_1,x_2,x_3)$ to conclude that
$y^T\bar{H}(x_1,x_2,x_3)y$ is sos. But upon dehomogenizing by
setting $x_3=1$, we conclude that $y^TH(x)y$ is sos. Hence, $p$ is
sos-convex.
\end{proof}

\begin{theorem}[Ahmadi, Blekherman, Parrilo~\cite{AAA_GB_PP_Convex_ternary_quartics}]
$\Sigma C_{3,4}=C_{3,4}$.
\end{theorem}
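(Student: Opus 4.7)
The plan is as follows. Let $p\in C_{3,4}$, and write $H(x)$ for its Hessian, a $3\times 3$ symmetric matrix whose entries are quadratic forms in $x=(x_1,x_2,x_3)$. As in the proofs in this subsection, I would aim to show directly that $y^T H(x)y$ is sos, where $y=(y_1,y_2,y_3)$. This is a biform in $(x,y)$ of bidegree $(2,2)$ and of total degree $4$ in $6$ variables. The crucial point, and the first obstacle, is that the biform theorem (Theorem~\ref{thm:biform.thm}) does not apply in the $3+3$ split: Choi's biquadratic form shows that, in the larger space of all symmetric $3\times 3$ matrices of ternary quadratics, there exist psd representatives that fail to be sos-matrices. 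Hence any argument must genuinely exploit the fact that $H(x)$ is a Hessian and not an arbitrary psd matrix of quadratic forms.

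The first concrete step I would take is to use Hilbert's theorem on ternary quartics. Since $p$ is convex, $p$ is psd (as noted immediately after Lemma~\ref{lem:helton.nie.sos-convex.then.sos}), and Hilbert's theorem ($\Sigma_{3,4}=P_{3,4}$) gives a representation $p=\sum_{i=1}^{k} q_i^2$ with each $q_i$ a ternary quadratic form. Differentiating twice yields
\[
H(x) \;=\; 2\sum_i \bigl( q_i(x)\,A_i \;+\; a_i(x)\,a_i(x)^T \bigr),
\]
where $A_i=\nabla^2 q_i$ is a constant symmetric matrix and $a_i(x)=\nabla q_i(x)$ is a vector of linear forms. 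Consequently,
\[
y^T H(x)\, y \;=\; 2\sum_i q_i(x)\,(y^T A_i\, y) \;+\; 2\sum_i \bigl(y^T a_i(x)\bigr)^2.
\]
The second sum is already a sum of squares, so the task reduces to analyzing the indefinite cross term $\sum_i q_i(x)(y^T A_i y)$ and arguing that it can be combined with the manifestly sos piece (or absorbed after rewriting the Hilbert decomposition of $p$) to produce an sos biform.

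The main obstacle is precisely this cross term: each $q_i(x)$ and each $y^T A_i y$ is a quadratic form of indefinite sign, and their product carries no a priori sos certificate. I expect the heart of the argument to be a duality or dimension-counting step showing that on the $15$-dimensional subspace of biforms of the form $y^T\operatorname{Hess}(p)y$ (cut out inside the $36$-dimensional ambient bi\-quadratic space by the linear integrability relations $\partial_k H_{ij}=\partial_i H_{kj}$), every psd element is sos. A natural route, and the one I would try to carry out following Blekherman's techniques for convex forms, is to pass to dual cones: show that every linear functional on ternary quartics that is nonnegative on $\Sigma C_{3,4}$ is also nonnegative on $C_{3,4}$, by using the known tight description of the dual of $\Sigma_{3,4}=P_{3,4}$ together with Cayley--Bacharach-type relations constraining point evaluations on zeros of Hessian determinants. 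This duality argument is where I expect the real work to lie and is presumably what motivates deferring the full proof to the companion paper~\cite{AAA_GB_PP_Convex_ternary_quartics}.
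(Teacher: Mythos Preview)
Your diagnosis matches the paper exactly: the paper does \emph{not} prove this theorem. It states the result, attributes it to~\cite{AAA_GB_PP_Convex_ternary_quartics}, and then only explains why the biform theorem fails here---precisely because Choi's matrix $C(x)$ is a psd but not sos $3\times 3$ matrix of ternary quadratics, while observing that $C(x)$ is not a valid Hessian since mixed third partials would have to commute. Everything you wrote about the obstacle is correct and is essentially what the paper itself says in lieu of a proof.

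Since there is no proof in the paper to compare against, let me instead comment on your sketch as a plan. Your Hessian decomposition from a Hilbert sos representation $p=\sum_i q_i^2$ is correct, and you have isolated the genuine difficulty in the indefinite cross term $\sum_i q_i(x)\,y^T A_i y$. The dimension count (15-dimensional Hessian subspace inside the 36-dimensional biquadratic space, cut out by the integrability constraints) is right, and the idea of working on the dual side with Cayley--Bacharach-type constraints is indeed in the spirit of Blekherman's methods. But as you yourself acknowledge, what you have written is a plan and not a proof: the step ``show that every linear functional nonnegative on $\Sigma C_{3,4}$ is also nonnegative on $C_{3,4}$'' is a restatement of the theorem, and the cross-term analysis is left entirely open. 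There is no gap to name because you have not claimed to close one; your proposal is an accurate roadmap of where the work lies, consistent with the paper's decision to defer the argument to~\cite{AAA_GB_PP_Convex_ternary_quartics}.
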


Unlike Hilbert's results $\tilde{\Sigma}_{2,4}=\tilde{P}_{2,4}$
and $\Sigma_{3,4}=P_{3,4}$ which are equivalent statements and
essentially have identical proofs, the proof of $\Sigma
C_{3,4}=C_{3,4}$ is considerably more involved than the proof of
$\tilde{\Sigma C}_{2,4}=\tilde{C}_{2,4}$. Here, we briefly point
out why this is the case and refer the reader
to~\cite{AAA_GB_PP_Convex_ternary_quartics} for more details.

If $p(x)\mathrel{\mathop:}=p(x_1,x_2,x_3)$ is a ternary quartic
form, its Hessian $H(x)$ is a $3\times 3$ matrix whose entries are
quadratic forms. In this case, we can no longer apply the biform
theorem to the form $y^TH(x)y$. In fact, the matrix
\begin{equation}\nonumber
C(x)=\begin{bmatrix} x_1^2+2x_2^2&-x_1x_2&-x_1x_3 \\ \\
-x_1x_2&x_2^2+2x_3^2&-x_2x_3 \\ \\
-x_1x_3&-x_2x_3&x_3^2+2x_1^2
\end{bmatrix},
\end{equation}
due to Choi~\cite{Choi_Biquadratic} serves as an explicit example
of a $3\times 3$ matrix with quadratic form entries that is
positive semidefinite but not an sos-matrix; i.e., $y^TC(x)y$ is
psd but not sos. However, the matrix $C(x)$ above is \emph{not} a
valid Hessian, i.e., it cannot be the matrix of the second
derivatives of any polynomial. If this was the case, the third
partial derivatives would commute. On the other hand, we have in
particular
$$\frac{\partial C_{1,1}(x)}{\partial x_3}=0\neq-x_3=\frac{\partial C_{1,3}(x)}{\partial
x_1}.$$ It is rather remarkable that with the additional
requirement of being a valid Hessian, the form $y^TH(x)y$ turns
out to be psd if and only if it is
sos~\cite{AAA_GB_PP_Convex_ternary_quartics}.

\subsection{Proofs of Theorems~\ref{thm:full.charac.polys} and~\ref{thm:full.charac.forms}: cases where $\tilde{\Sigma C}_{n,d}\subset\tilde{C}_{n,d}, \Sigma
C_{n,d}\subset C_{n,d}$}\label{subsec:proof.non.equal.cases}

The goal of this subsection is to establish that the cases
presented in the previous subsection are the \emph{only} cases
where convexity and sos-convexity are equivalent. We will first
give explicit examples of convex but not sos-convex
polynomials/forms that are ``minimal'' jointly in the degree and
dimension and then present an argument for all dimensions and
degrees higher than those of the minimal cases.

\subsubsection{Minimal convex but not sos-convex polynomials/forms}
 The minimal examples of convex but not sos-convex
 polynomials (resp. forms) turn out to belong to $\tilde{C}_{2,6}\setminus\tilde{\Sigma C}_{2,6}$ and $\tilde{C}_{3,4}\setminus\tilde{\Sigma C}_{3,4}$ (resp. $C_{3,6}\setminus\Sigma C_{3,6}$ and $C_{4,4}\setminus\Sigma C_{4,4}$).
 Recall from Remark~\ref{rmk:difficulty.homogz.dehomogz} that we
 lack a general argument for going from convex but not sos-convex
 forms to polynomials or vice versa. Because of this, one would need to
 present four different polynomials in the sets mentioned above
 and prove that each polynomial is (i) convex and (ii) not
 sos-convex. This is a total of eight arguments to make which is
 quite cumbersome. However, as we will see in the proof of Theorem~\ref{thm:minimal.2.6.and.3.6} and~\ref{thm:minimal.3.4.and.4.4} below, we have been able to find examples that
 act ``nicely'' with respect to particular ways of dehomogenization. This will allow us to reduce the
 total number of claims we have to prove from eight to four.

%

 The polynomials that we are about to present next have been
 found with the assistance of a computer and by employing some ``tricks'' with semidefinite
 programming.\footnote{We do not elaborate here on how this was exactly done. The interested reader is referred to~\cite[Sec. 4]{AAA_PP_not_sos_convex_journal}, where a similar technique for formulating semidefinite programs that can search over a convex subset of the (non-convex) set of convex but not sos-convex polynomials is explained.  The approach in~\cite{AAA_PP_not_sos_convex_journal} however does not lead to examples that are minimal.} In this process, we have made use of software
 packages YALMIP~\cite{yalmip}, SOSTOOLS~\cite{sostools}, and the
 SDP solver SeDuMi~\cite{sedumi}, which we acknowledge here. To
 make the paper relatively self-contained and to emphasize the
 fact that using \emph{rational sum of squares certificates} one
 can make such computer assisted proofs fully formal, we present the proof of Theorem~\ref{thm:minimal.2.6.and.3.6} below in the appendix. On the other hand, the proof of Theorem~\ref{thm:minimal.3.4.and.4.4}, which is very similar in style to the proof of Theorem~\ref{thm:minimal.2.6.and.3.6}, is largely omitted to save space. All of the proofs are available in electronic
 form and in their entirety at \texttt{http://aaa.lids.mit.edu/software} or at \aaan{\texttt{http://arxiv.org/abs/1111.4587}}.

\begin{theorem}\label{thm:minimal.2.6.and.3.6} $\tilde{\Sigma C}_{2,6}$ is a proper subset of
$\tilde{C}_{2,6}$. $\Sigma C_{3,6}$ is a proper subset of
$C_{3,6}$.
\end{theorem}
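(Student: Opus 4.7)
The plan is to exhibit a single trivariate sextic form $p(x_1,x_2,x_3)$ that simultaneously handles both halves of the theorem via an advantageous dehomogenization. Specifically, I would set $\tilde p(x_1,x_2):=p(x_1,x_2,1)$ and exploit two structural facts. First, convexity of $p$ implies convexity of $\tilde p$: the Hessian $H_{\tilde p}(x_1,x_2)$ is the top-left $2\times 2$ principal submatrix of $H_p(x_1,x_2,1)$, and principal submatrices of psd matrices are psd. Second, and more crucially, \emph{sos-convexity of $p$ implies sos-convexity of $\tilde p$}: if $y^T H_p(x) y$ is sos in $\mathbb{R}[x_1,x_2,x_3,y_1,y_2,y_3]$, then substituting $y_3=0$ and then $x_3=1$ preserves the sos property and yields an sos certificate for $y^T H_{\tilde p}(x_1,x_2) y$ in $\mathbb{R}[x_1,x_2,y_1,y_2]$. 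The contrapositive says that failure of sos-convexity of $\tilde p$ propagates up to $p$. Consequently it suffices to (i) prove $p$ is convex and (ii) prove $\tilde p$ is \emph{not} sos-convex; together these give $p\in C_{3,6}\setminus\Sigma C_{3,6}$ and $\tilde p\in\tilde C_{2,6}\setminus\tilde{\Sigma C}_{2,6}$ at once.

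Step 1 is to exhibit an explicit candidate form $p\in\mathbb{R}[x_1,x_2,x_3]$ of degree $6$ with rational coefficients, produced by a tailored semidefinite programming search over the (non-convex) set of convex but not sos-convex forms, in the spirit of the construction sketched in~\cite{AAA_PP_not_sos_convex_journal}. Step 2 is to certify $p\in C_{3,6}$. The Hessian quadratic form $f(x,y):=y^T H_p(x)y$ is a biform of bidegree $(4,2)$ which by design is psd but not sos, so to certify psd-ness by a verifiable algebraic identity I would provide a Positivstellensatz-style certificate of the form
\[
(x_1^2+x_2^2+x_3^2)^k\cdot y^T H_p(x)y \;=\; \sigma(x,y),
\]
where $\sigma$ is an explicit sum of squares with rational coefficients and $k$ is a small positive integer. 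Since the multiplier is strictly positive for $x\neq 0$ and the Hessian entries (forms of degree $4$ in $x$) vanish identically at $x=0$, this identity forces $f\ge 0$ on all of $\mathbb{R}^6$, establishing convexity of $p$.

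Step 3 is to certify $\tilde p\notin\tilde{\Sigma C}_{2,6}$. Setting $g(x_1,x_2,y_1,y_2):=y^T H_{\tilde p}(x_1,x_2)y$, a polynomial quartic in $x$, quadratic in $y$, and of total degree $6$, I would exhibit an explicit dual separating functional: a symmetric \emph{rational} matrix $M\succeq 0$, indexed by the monomials $x^\alpha y_i$ that appear in any putative Gram representation of $g$, such that for the associated linear functional $L_M$ on polynomials one has $L_M(q^2)\ge 0$ for every admissible $q$ while $L_M(g)<0$. The first condition is the statement $M\succeq 0$, which I would verify by an explicit rational Cholesky-like factorization (or by checking all leading principal minors); the second is a single rational arithmetic computation. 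Taken together these certify that $g$ admits no sos decomposition, so $\tilde p$ is not sos-convex. Combining Steps 2 and 3 with the two structural implications above closes the proof.

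The hard part will be Step 3: producing a compact, human-checkable rational dual certificate of non-sos-convexity. SDP solvers return floating-point data which must be rationalized and then re-verified both for positive semidefiniteness of the dual matrix and for the strict negative pairing against $g$; naive rounding generically destroys feasibility, and one typically has to exploit the symmetries of the candidate $p$ (inherited from its construction) to keep the certificate small enough to print. Step 2 is conceptually more routine, but still requires a well-chosen multiplier degree $k$ and careful rational rounding of an sos decomposition of $(x_1^2+x_2^2+x_3^2)^k f(x,y)$. Both certificates are most naturally deferred to an appendix and the accompanying electronic supplement.
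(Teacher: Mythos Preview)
Your proposal is correct and follows essentially the same strategy as the paper: produce one explicit ternary sextic form, certify its convexity by an sos-after-multiplier identity for $y^TH_p(x)y$, certify non-sos-convexity of a bivariate affine restriction via a rational dual functional, and then invoke preservation of convexity and sos-convexity under affine restriction to get both claims at once. The only practical wrinkle worth flagging is that the paper's specific form $f$ has the property that the \emph{standard} dehomogenization $f(x_1,x_2,1)$ is sos-convex (see the footnote there), so the authors instead restrict along $x_3=1-\tfrac12 x_2$; your plan to search directly for a $p$ whose straight dehomogenization already fails sos-convexity is fine in principle, but be aware that a given $p$ may force you to a non-obvious affine slice, and the paper's multiplier is $x_1^2+x_2^2$ rather than a power of $\|x\|^2$.
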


\begin{proof}
We claim that the form
\begin{equation}\label{eq:minim.form.3.6}
\begin{array}{lll}
f(x_1,x_2,x_3)&=&77x_1^6-155x_1^5x_2+445x_1^4x_2^2+76x_1^3x_2^3+556x_1^2x_2^4+68x_1x_2^5+240x_2^6-9x_1^5x_3 \\
\  &\ &\ \\
   \  &\ &-1129x_1^3x_2^2x_3+62x_1^2x_2^3x_3+1206x_1x_2^4x_3-343x_2^5x_3+363x_1^4x_3^2+773x_1^3x_2x_3^2 \\
   \  &\ &\ \\
    \ &\ &+891x_1^2x_2^2x_3^2-869x_1x_2^3x_3^2+1043x_2^4x_3^2-14x_1^3x_3^3-1108x_1^2x_2x_3^3-216x_1x_2^2x_3^3\\
    \  &\ &\ \\
    \ &\ &-839x_2^3x_3^3+721x_1^2x_3^4+436x_1x_2x_3^4+378x_2^2x_3^4+48x_1x_3^5-97x_2x_3^5+89x_3^6
\end{array}
\end{equation}
belongs to $C_{3,6}\setminus\Sigma C_{3,6}$, and the
polynomial\footnote{The polynomial $f(x_1,x_2,1)$ turns out to be
sos-convex, and therefore does not do the job. One can of course
change coordinates, and then in the new coordinates perform the
dehomogenization by setting $x_3=1$.}
\begin{equation}\label{eq:minim.poly.2.6}
\tilde{f}(x_1,x_2)=f(x_1,x_2,1-\frac{1}{2}x_2)
\end{equation}
belongs to $\tilde{C}_{2,6}\setminus\tilde{\Sigma C}_{2,6}$. Note
that since convexity and sos-convexity are both preserved under
restrictions to affine subspaces (recall
Remark~\ref{rmk:sos-convexity.restriction}), it suffices to show
that the form $f$ in (\ref{eq:minim.form.3.6}) is convex and the
polynomial $\tilde{f}$ in (\ref{eq:minim.poly.2.6}) is not
sos-convex. Let $x\mathrel{\mathop:}=(x_1,x_2,x_2)^T$,
$y\mathrel{\mathop:}=(y_1,y_2,y_3)^T$,
$\tilde{x}\mathrel{\mathop:}=(x_1,x_2)^T$,
$\tilde{y}\mathrel{\mathop:}=(y_1,y_2)^T$, and denote the Hessian
of $f$ and $\tilde{f}$ respectively by $H_f$ and $H_{\tilde{f}}$.
In the appendix, we provide rational Gram matrices which prove
that the form
\begin{equation}\label{eq:y.H_f.y.xi^2.3.6.example}
(x_1^2+x_2^2)\cdot y^TH_f(x)y
\end{equation}
is sos. This, together with nonnegativity of $x_1^2+x_2^2$ and
continuity of $y^TH_f(x)y$, implies that $y^TH_f(x)y$ is psd.
Therefore, $f$ is convex. The proof that $\tilde{f}$ is not
sos-convex proceeds by showing that $H_{\tilde{f}}$ is not an
sos-matrix via a separation argument. In the appendix, we present
a separating hyperplane that leaves the appropriate sos cone on
one side and the polynomial
\begin{equation}\label{eq:y.H_f_tilda.y.xi^2.2.6.example}
\tilde{y}^TH_{\tilde{f}}(\tilde{x})\tilde{y}
\end{equation}
on the other.
\end{proof}

\begin{theorem}\label{thm:minimal.3.4.and.4.4} $\tilde{\Sigma C}_{3,4}$ is a proper subset of
$\tilde{C}_{3,4}$. $\Sigma C_{4,4}$ is a proper subset of
$C_{4,4}$.
\end{theorem}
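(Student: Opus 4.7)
The plan is to mirror the structure of the proof of Theorem~\ref{thm:minimal.2.6.and.3.6}, producing an explicit quartic form $f(x_1,x_2,x_3,x_4)\in C_{4,4}\setminus\Sigma C_{4,4}$ together with a carefully chosen affine dehomogenization $\tilde{f}(x_1,x_2,x_3)=f(x_1,x_2,x_3,\ell(x_1,x_2,x_3))$, where $\ell$ is an affine form selected so that $\tilde{f}\in\tilde{C}_{3,4}\setminus\tilde{\Sigma C}_{3,4}$. Because both convexity and sos-convexity are preserved under restrictions to affine subspaces (Remark~\ref{rmk:sos-convexity.restriction}), it suffices to establish two things: that the ambient form $f$ is convex, and that the restricted polynomial $\tilde{f}$ is not sos-convex. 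The two remaining containments are then automatic, which cuts the bookkeeping roughly in half. In contrast with the form $q$ in equation (\ref{eq:clean.quartic.convex.not.sos.convex}), which lives in $C_{6,4}\setminus\Sigma C_{6,4}$ and is built from Choi's biquadratic, here we cannot simply import that construction: we must reduce the number of variables from six to four, so we would search for a suitable $f$ by semidefinite programming using the methodology already employed for Theorem~\ref{thm:minimal.2.6.and.3.6}.

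To certify convexity of $f$, let $H_f(x)$ denote its Hessian and set $y=(y_1,y_2,y_3,y_4)^T$. The biform $y^T H_f(x) y$ is quadratic in $y$ and quadratic in $x$, so it is a quartic in $\mathbb{R}[x,y]$. Since $\Sigma_{4,4}\neq P_{4,4}$ by Hilbert, we cannot hope in general that $y^T H_f(x)y$ itself be sos, and indeed the whole point of the example is that it is not. Instead I would follow the device used in Theorem~\ref{thm:minimal.2.6.and.3.6} and exhibit a rational positive semidefinite Gram matrix certifying that
\begin{equation*}
(x_1^2+x_2^2+x_3^2+x_4^2)\cdot y^T H_f(x) y
\end{equation*}
is sos (possibly after replacing the multiplier by some other strictly positive quadratic form in $x$ that renders the search feasible). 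Combined with the nonnegativity of the multiplier on $\mathbb{R}^4\setminus\{0\}$ and the continuity of $y^T H_f(x)y$, this forces $y^T H_f(x)y\geq 0$ on all of $\mathbb{R}^4\times\mathbb{R}^4$, and hence $f\in C_{4,4}$.

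To certify that $\tilde{f}$ is not sos-convex, let $\tilde{x}=(x_1,x_2,x_3)^T$, $\tilde{y}=(y_1,y_2,y_3)^T$, and write $H_{\tilde{f}}(\tilde{x})$ for the Hessian of $\tilde{f}$. I would produce a dual certificate: an explicit rational linear functional on the finite-dimensional space of quartics in $(\tilde{x},\tilde{y})$ of the same bipartite shape as $\tilde{y}^T H_{\tilde{f}}(\tilde{x})\tilde{y}$ which is nonnegative on every Gram-matrix representation of an sos decomposition (i.e.\ separates the appropriate sos cone), yet evaluates strictly negatively at $\tilde{y}^T H_{\tilde{f}}(\tilde{x})\tilde{y}$. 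By standard SDP duality this precludes the existence of an sos decomposition of $\tilde{y}^T H_{\tilde{f}}(\tilde{x})\tilde{y}$, so $H_{\tilde{f}}$ is not an sos-matrix and $\tilde{f}\notin\tilde{\Sigma C}_{3,4}$.

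The main obstacle is producing the form $f$ and the two certificates in the minimal $(n,d)=(4,4)$ regime. The two requirements, convex and not sos-convex, pull in opposite directions, and the feasible set we must locate a rational point in is very thin, so the SDP search must be set up with care, along the lines of \cite{AAA_PP_not_sos_convex_journal}. Choosing the affine map $\ell$ is equally delicate: as the footnote after (\ref{eq:minim.poly.2.6}) illustrates, naive choices such as $x_4=1$ may well yield an sos-convex restriction, and one typically must change coordinates first. Once $f$, $\ell$, the rational Gram matrix for the convexity certificate, and the rational separating functional for the non sos-convexity certificate are in hand, the verification is purely mechanical: checking positive semidefiniteness of a rational symmetric matrix, and evaluating a linear functional. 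Following the convention of Theorem~\ref{thm:minimal.2.6.and.3.6}, I would relegate these numerical certificates to an appendix or the online supplement and merely state the conclusion in the body of the paper.
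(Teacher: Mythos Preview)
Your proposal is correct and matches the paper's approach almost exactly: the paper exhibits an explicit quartic form $h\in C_{4,4}$, certifies convexity by showing that $(x_2^2+x_3^2+x_4^2)\cdot y^TH_h(x)y$ is sos, takes a dehomogenization $\tilde{h}$, and certifies that $\tilde{h}$ is not sos-convex via a separating hyperplane. Two small points where the paper differs from your expectations: the naive dehomogenization $\tilde{h}(x_1,x_2,x_3)=h(x_1,x_2,x_3,1)$ already works here (no nontrivial affine map $\ell$ is needed, unlike the $(2,6)$ case), and the multiplier used is $x_2^2+x_3^2+x_4^2$ rather than the full $\|x\|^2$, with the continuity argument handling the zero set just as in Theorem~\ref{thm:minimal.2.6.and.3.6}.
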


\begin{proof}
We claim that the form
\begin{equation}\label{eq:minim.form.4.4}
\begin{array}{lll}
h(x_1,\ldots,x_4)&=&1671x_1^4-4134x_1^3x_2-3332x_1^3x_3+5104x_1^2x_2^2+4989x_1^2x_2x_3+3490x_1^2x_3^2 \\
\  &\ &\ \\
 \  &\ &-2203x_1x_2^3-3030x_1x_2^2x_3-3776x_1x_2x_3^2-1522x_1x_3^3+1227x_2^4-595x_2^3x_3 \\
 \  &\ &\ \\
 \  &\ &+1859x_2^2x_3^2+1146x_2x_3^3+979x_3^4+1195728x_4^4-1932x_1x_4^3-2296x_2x_4^3 \\
 \  &\ &\ \\
 \  &\ &-3144x_3x_4^3+1465x_1^2x_4^2-1376x_1^3x_4-263x_1x_2x_4^2+2790x_1^2x_2x_4+2121x_2^2x_4^2 \\
 \  &\ &\ \\
 \  &\ &-292x_1x_2^2x_4-1224x_2^3x_4+2404x_1x_3x_4^2+2727x_2x_3x_4^2-2852x_1x_3^2x_4 \\
 \  &\ &\ \\
 \  &\ &-388x_2x_3^2x_4-1520x_3^3x_4+2943x_1^2x_3x_4-5053x_1x_2x_3x_4
 +2552x_2^2x_3x_4 \\
  \  &\ &\ \\
 \  &\ & +3512x_3^2x_4^2
\end{array}
\end{equation}
belongs to $C_{4,4}\setminus\Sigma C_{4,4}$, and the polynomial
\begin{equation}\label{eq:minim.poly.3.4}
\tilde{h}(x_1,x_2,x_3)=h(x_1,x_2,x_3,1)
\end{equation}
belongs to $\tilde{C}_{3,4}\setminus\tilde{\Sigma C}_{3,4}$. Once
again, it suffices to prove that $h$ is convex and $\tilde{h}$ is
not sos-convex. Let $x\mathrel{\mathop:}=(x_1,x_2,x_3,x_4)^T$,
$y\mathrel{\mathop:}=(y_1,y_2,y_3,y_4)^T$, and denote the Hessian
of $h$ and $\tilde{h}$ respectively by $H_h$ and $H_{\tilde{h}}$.
The proof that $h$ is convex is done by showing that the form
\begin{equation}\label{eq:y.H_h.y.xi^2.4.4.example}
(x_2^2+x_3^2+x_4^2)\cdot y^TH_h(x)y
\end{equation}
is sos.\footnote{The choice of multipliers in
(\ref{eq:y.H_f.y.xi^2.3.6.example}) and
(\ref{eq:y.H_h.y.xi^2.4.4.example}) is motivated by a result of
Reznick in~\cite{Reznick_Unif_denominator}.} The proof that
$\tilde{h}$ is not sos-convex is done again by means of a
separating hyperplane.
\end{proof}

\subsubsection{Convex but not sos-convex polynomials/forms in all higher degrees and
dimensions}\label{subsubsec:increasing.degree.and.vars} Given a
convex but not sos-convex polynomial (form) in $n$ variables, it
is very easy to argue that such a polynomial (form) must also
exist in a larger number of variables. If $p(x_1,\ldots,x_n)$ is a
form in $C_{n,d}\setminus\Sigma C_{n,d}$, then
$$\bar{p}(x_1,\ldots,x_{n+1})=p(x_1,\ldots,x_n)+x_{n+1}^d$$
belongs to $C_{n+1,d}\setminus\Sigma C_{n+1,d}$. Convexity of
$\bar{p}$ is obvious since it is a sum of convex functions. The
fact that $\bar{p}$ is not sos-convex can also easily be seen from
the block diagonal structure of the Hessian of $\bar{p}$: if the
Hessian of $\bar{p}$ were to factor, it would imply that the
Hessian of $p$ should also factor. The argument for going from
$\tilde{C}_{n,d}\setminus\tilde{\Sigma C}_{n,d}$ to
$\tilde{C}_{n+1,d}\setminus\tilde{\Sigma C}_{n+1,d}$ is identical.

Unfortunately, an argument for increasing the degree of convex but
not sos-convex forms seems to be significantly more difficult to
obtain. In fact, we have been unable to come up with a natural
operation that would produce a \aaan{form} in
$C_{n,d+2}\setminus\Sigma C_{n,d+2}$ from a form in
$C_{n,d}\setminus\Sigma C_{n,d}$. We will instead take a different
route: we are going to present a general procedure for going from
a form in $P_{n,d}\setminus\Sigma_{n,d}$ to a form in
$C_{n,d+2}\setminus\Sigma C_{n,d+2}$.  This will serve our purpose
of constructing convex but not sos-convex forms in higher degrees
and is perhaps also of independent interest in itself. For
instance, it can be used to construct convex but not sos-convex
forms that inherit structural properties (e.g. symmetry) of the
known examples of psd but not sos forms. The procedure is
constructive modulo the value of two positive constants ($\gamma$
and $\alpha$ below) whose existence will be shown
nonconstructively.\footnote{The procedure can be thought of as a
generalization of the approach in our earlier work
in~\cite{AAA_PP_not_sos_convex_journal}.}

Although the proof of the general case is no different, we present
this construction for the case $n=3$. The reason is that it
suffices for us to construct forms in $C_{3,d}\setminus\Sigma
C_{3,d}$ for $d$ even and $\geq 8$. These forms together with the
two forms in $C_{3,6}\setminus\Sigma C_{3,6}$ and
$C_{4,4}\setminus\Sigma C_{4,4}$ presented in
(\ref{eq:minim.form.3.6}) and (\ref{eq:minim.form.4.4}), and with
the simple procedure for increasing the number of variables cover
all the values of $n$ and $d$ for which convex but not sos-convex
forms exist.

For the remainder of this section, let
$x\mathrel{\mathop:}=(x_1,x_2,x_3)^T$ and
$y\mathrel{\mathop:}=(y_1,y_2,y_3)^T$.

\begin{theorem}\label{thm:conv_not_sos_conv_forms_n3d}
Let $m\mathrel{\mathop:}=m(x)$ be a ternary form of degree $d$
(with $d$ necessarily even and $\geq 6$) satisfying the following
three requirements:
\begin{description}
\item[R1:] $m$ is positive definite. \item[R2:] $m$ is not a sum
of squares. \item[R3:] The Hessian $H_m$ of $m$ is positive
definite at the point $(1,0,0)^T$.
\end{description}
Let $g\mathrel{\mathop:}=g(x_2,x_3)$ be any bivariate form of
degree $d+2$ whose Hessian is positive definite. \\ Then, there
exists a constant $\gamma>0$, such that the form $f$ of degree
$d+2$ given by
\begin{equation}\label{eq:construction.of.f(x)}
f(x)=\int_0^{x_1}\int_0^s m(t,x_2,x_3) dt ds + \gamma g(x_2,x_3)
\end{equation}
is convex but not sos-convex.
\end{theorem}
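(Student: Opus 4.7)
The statement splits into two independent claims: that $f$ fails to be sos-convex for \emph{every} $\gamma>0$, and that $f$ is convex for \emph{some} sufficiently large $\gamma$. I would treat them separately; the first is essentially free, while the second is a compactness-plus-dominance argument.

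\textbf{Non-sos-convexity.} Directly from the defining integral, $\partial^2 f/\partial x_1^2 = m(x_1,x_2,x_3)$, independent of $g$ and $\gamma$. If $f$ were sos-convex, then $y^T H_f(x) y$ would be sos in $(x,y)$; restricting to $y_2=y_3=0$ (which preserves the sos property, as in Remark~\ref{rmk:sos-convexity.restriction}) would give that $y_1^2 m(x)$ is sos. Any such decomposition $y_1^2 m = \sum_i q_i(x,y_1)^2$ forces $y_1 \mid q_i$ (set $y_1=0$), whence $m = \sum_i (q_i/y_1)^2$ would be sos, contradicting R2.

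\textbf{Convexity.} Writing $F(x) = \int_0^{x_1}\int_0^s m(t,x_2,x_3)\,dt\,ds$, one has $H_f(x) = H_F(x) + \gamma H_g^*(x_2,x_3)$, where $H_g^*$ is the $3\times 3$ matrix whose only nonzero entries are the $\partial_{jk}g$ in the lower-right $2\times 2$ block. Both $H_F$ and $H_g^*$ consist of forms of degree $d$ in $x$, so by homogeneity it suffices to prove $y^T H_f(x) y \ge 0$ on the compact set $K := \{(x,y):\|x\|=\|y\|=1\}$. Setting $y' := (y_2,y_3)$, the term $\gamma\, y^T H_g^* y = \gamma\, y'^T H_g(x_2,x_3) y'$ is everywhere $\ge 0$ and, on $K$, vanishes precisely on the ``degenerate locus'' $D := \{y'=0\}\cup\{(x_2,x_3)=0\}$. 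My plan is to establish that $y^T H_F(x)y$ is already \emph{strictly positive} on $D \cap K$, so that continuity lets the large-$\gamma$ term take care of everything else.

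On $\{y'=0\}\cap K$ one has $y=(\pm 1,0,0)^T$ and $y^T H_F(x)y = \partial_1^2 F(x) = m(x)$, which by R1 achieves a strictly positive minimum on $\|x\|=1$. On $\{(x_2,x_3)=0\}\cap K$, $x=(\pm 1,0,0)^T$; a direct computation from the Fubini form $F(x)=\int_0^{x_1}(x_1-t)m(t,x_2,x_3)\,dt$, using the scaling $\partial_\alpha m(t,0,0)=t^{d-|\alpha|}\partial_\alpha m(1,0,0)$ together with Euler's identities ($\partial_{11}m(1,0,0)=d(d-1)m(1,0,0)$ and $\partial_{1j}m(1,0,0)=(d-1)\partial_j m(1,0,0)$) and the Beta-integral $\int_0^1(1-t)t^{d-2}\,dt = \tfrac{1}{d(d-1)}$, yields the clean formula $H_F(\pm 1,0,0)=\tfrac{1}{d(d-1)}\,H_m(1,0,0)$, which is positive definite by R3. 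This verification is the crux of the proof and precisely the reason R3 was imposed; I expect it to be the main technical point. Once strict positivity on $D\cap K$ is in hand, by continuity there exist $\epsilon_0,\phi_0>0$ with $y^T H_g^* y \le \epsilon_0 \Rightarrow y^T H_F y \ge \phi_0$; outside this sublevel set, $y^T H_g^* y > \epsilon_0$ while $y^T H_F y \ge -M$ for some $M>0$ on $K$. Any $\gamma > M/\epsilon_0$ then makes $y^T H_f y > 0$ throughout $K$, proving convexity.
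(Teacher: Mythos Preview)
Your proposal is correct and follows essentially the same route as the paper: the non-sos-convexity comes from $\partial_{11}f=m$ being a non-sos diagonal entry of $H_f$ (the paper phrases this via Lemma~\ref{lem:sos.mat.then.minor.sos}, you via the direct restriction $y_2=y_3=0$), and the convexity is obtained by the identical compactness split on the bi-sphere, hinging on the same key identity $H_F(1,0,0)=\tfrac{1}{d(d-1)}H_m(1,0,0)$ (which the paper in fact leaves to the reader and you verify). The only cosmetic difference is that the paper parameterizes the ``bad'' region by $x_2^2+x_3^2<\delta$ or $y_2^2+y_3^2<\delta$, whereas you parameterize it by the sublevel set $\{y^TH_g^*y\le\epsilon_0\}$; these are interchangeable.
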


\aaa{The form $f$ in (\ref{eq:construction.of.f(x)}) is just a
specific polynomial that when differentiated twice with respect to
$x_1$ gives $m$. The reason for this construction will become
clear once we present the proof of this theorem. Before we do
that, let us comment on how one can get}
examples of forms $m$ and $g$ that satisfy the requirements of the
theorem. The choice of $g$ is in fact very easy. We can e.g. take
\begin{equation}\nonumber
g(x_2,x_3)=(x_2^2+x_3^2)^{\frac{d+2}{2}},
\end{equation}
which has a positive definite Hessian. As for the choice of $m$,
essentially any psd but not sos ternary form can be turned into a
form that satisfies requirements {\bf R1}, {\bf R2}, and {\bf R3}.
Indeed if the Hessian of such a form is positive definite at just
one point, then that point can be taken to $(1,0,0)^T$ by a change
of coordinates without changing the properties of being psd and
not sos. If the form is not positive definite, then it can made so
by adding a small enough multiple of a positive definite form to
it. For concreteness, we construct in the next lemma a family of
forms that together with the above theorem will give us convex but
not sos-convex ternary forms of any degree $\geq 8$.

\begin{lemma}\label{lem:choice.of.m(x)}
For any even degree $d\geq 6$, there exists a constant $\alpha>0$,
such that the form
\begin{equation}\label{eq:choice.of.m(x)}
m(x)=x_1^{d-6}(x_1^2x_2^4+x_1^4x_2^2-3x_1^2x_2^2x_3^2+x_3^6)+\alpha(x_1^2+x_2^2+x_3^2)^{\frac{d}{2}}
\end{equation}
satisfies the requirements {\bf R1}, {\bf R2}, and {\bf R3} of
Theorem~\ref{thm:conv_not_sos_conv_forms_n3d}.
\end{lemma}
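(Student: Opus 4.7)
The plan is to verify R1 and R3 for \emph{every} $\alpha>0$, and then to choose $\alpha>0$ small enough so that R2 also holds. The last step is the real obstacle; the first two are essentially bookkeeping.

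\textbf{R1 (positive definiteness).} The first summand of $m$ factors as $x_1^{d-6}$ times the Motzkin form $M(x_1,x_2,x_3)$. Since $d$ is even with $d\geq 6$, the exponent $d-6$ is even, so $x_1^{d-6}\geq 0$; and $M$ is psd. Hence the first summand is nonnegative, and for every nonzero $x$ we have $m(x)\geq \alpha(x_1^2+x_2^2+x_3^2)^{d/2}>0$.

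\textbf{R3 (Hessian at $e_1=(1,0,0)^T$).} Every monomial of $M$ contains a factor of $x_2$ or $x_3$, so $M(e_1)=0$ and $\nabla M(e_1)=0$. By the Leibniz rule the cross terms in the Hessian of $x_1^{d-6}M$ at $e_1$ vanish, leaving $1^{d-6}\cdot H_M(e_1)$. A short computation gives $H_M(e_1)=\operatorname{diag}(0,2,0)$, while the Hessian of $(x_1^2+x_2^2+x_3^2)^{d/2}$ at $e_1$ is $d\cdot\operatorname{diag}(d-1,1,1)$. Their $\alpha$-weighted sum is diagonal with strictly positive entries for every $\alpha>0$.

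\textbf{R2 (not sos).} I would argue by contradiction, invoking the closedness of the sos cone in the same spirit as in the proof of Theorem~\ref{thm:sos.convexity.3.equivalent.defs}. The key preliminary claim is that the unperturbed form $x_1^{d-6}M$ is itself not sos. For $d=6$ this is just Motzkin's result that $M\notin\Sigma_{3,6}$. For $d\geq 8$, I would use descent on the exponent: if $x_1^{d-6}M=\sum_i q_i^2$, then setting $x_1=0$ forces each $q_i(0,x_2,x_3)=0$, so $x_1 \mid q_i$. Writing $q_i=x_1 r_i$ and dividing by $x_1^2$ yields $x_1^{d-8}M=\sum_i r_i^2$, and iterating $(d-6)/2$ times reduces to $M$ itself being sos, a contradiction. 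Now if R2 failed for every $\alpha>0$, one could pick $\alpha_k\to 0^+$ with $m_{\alpha_k}$ sos; the coefficientwise limit would be $x_1^{d-6}M$, which by closedness of $\Sigma_{3,d}$ would also be sos, contradicting the claim just proved. Hence some $\alpha>0$ satisfies R2, and since R1 and R3 hold for every $\alpha>0$, this same $\alpha$ works for all three requirements.

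The bulk of the work is the non-sos claim for $x_1^{d-6}M$; the descent via ``$x_1=0$ kills the left side'' is clean once one notices it, and the rest reduces to a Hessian calculation plus a single application of closedness.
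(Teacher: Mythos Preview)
Your proof is correct and follows the same overall strategy as the paper: verify R1 and R3 for every $\alpha>0$, then use closedness of $\Sigma_{3,d}$ to get R2 for some small $\alpha$ once you know $x_1^{d-6}M$ is not sos. There are two pleasant differences in execution. For R3, the paper argues by parity (all monomials of $m$ are even, so no off-diagonal $x_1^{d-2}$ term can survive in $H_m$ at $e_1$, while the diagonal picks up positive multiples of $x_1^{d-2}$ from $x_1^d$, $x_1^{d-2}x_2^2$, $x_1^{d-2}x_3^2$); you instead compute directly via the Leibniz rule, using $M(e_1)=0$ and $\nabla M(e_1)=0$ to kill the cross terms. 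For the non-sos claim on $x_1^{d-6}M$, the paper simply cites~\cite{Reznick}, whereas your ``set $x_1=0$, force $x_1\mid q_i$, divide, repeat'' descent is self-contained and elementary. Both executions are short; yours trades a citation for an explicit argument.
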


\begin{proof}
The form $$x_1^2x_2^4+x_1^4x_2^2-3x_1^2x_2^2x_3^2+x_3^6$$ is the
familiar Motzkin form in (\ref{eq:Motzkin.form}) that is psd but
not sos~\cite{MotzkinSOS}. For any even degree $d\geq 6$, the form
$$x_1^{d-6}(x_1^2x_2^4+x_1^4x_2^2-3x_1^2x_2^2x_3^2+x_3^6)$$ is a
form of degree $d$ that is clearly still psd and less obviously
still not sos; see~\cite{Reznick}. This together with the fact
that $\Sigma_{n,d}$ is a closed cone implies existence of a small
positive value of $\alpha$ for which the form $m$ in
(\ref{eq:choice.of.m(x)}) is positive definite but not a sum of
squares, hence satisfying requirements {\bf R1} and {\bf R2}.

Our next claim is that for any positive value of $\alpha$, the
Hessian $H_m$ of the form $m$ in (\ref{eq:choice.of.m(x)})
satisfies
\begin{equation}\label{eq:Hessian.at.1.0.0.}
H_m(1,0,0)=\begin{bmatrix} c_1 & 0 & 0 \\ 0 & c_2 & 0 \\
0& 0& c_3
\end{bmatrix}
\end{equation}
for some positive constants $c_1,c_2,c_3$, therefore also passing
requirement {\bf R3}. To see the above equality, first note that
since $m$ is a form of degree $d$, its Hessian $H_m$ will have
entries that are forms of degree $d-2$. Therefore, the only
monomials that can survive in this Hessian after setting $x_2$ and
$x_3$ to zero are multiples of $x_1^{d-2}$. It is easy to see that
an $x_1^{d-2}$ monomial in an off-diagonal entry of $H_m$ would
lead to a monomial in $m$ that is not even. On the other hand, the
form $m$ in (\ref{eq:choice.of.m(x)}) only has even monomials.
This explains why the off-diagonal entries of the right hand side
of (\ref{eq:Hessian.at.1.0.0.}) are zero. Finally, we note that
for any positive value of $\alpha$, the form $m$ in
(\ref{eq:choice.of.m(x)}) includes positive multiples of $x_1^d$,
$x_1^{d-2}x_2^2$, and $x_1^{d-2}x_3^2$, which lead to positive
multiples of $x_1^{d-2}$ on the diagonal of $H_m$. Hence, $c_1,
c_2$, and $c_3$ are positive.
\end{proof}

Next, we state two lemmas that will be employed in the proof of
Theorem~\ref{thm:conv_not_sos_conv_forms_n3d}.

\begin{lemma}\label{lem:y.Hm.y>0.on.some.part}
Let $m$ be a trivariate form satisfying the requirements {\bf R1}
and {\bf R3} of Theorem~\ref{thm:conv_not_sos_conv_forms_n3d}. Let
$H_{\hat{m}}$ denote the Hessian of the form $\int_0^{x_1}\int_0^s
m(t,x_2,x_3) dt ds$. Then, there exists a positive constant
$\delta,$ such that
$$y^TH_{\hat{m}}(x)y>0$$ on the set
\begin{equation}\label{eq:the.set.S}
\mathcal{S}\mathrel{\mathop:}=\{(x,y) \ |\  ||x||=1, ||y||=1, \
(x_2^2+x_3^2<\delta \ \mbox{or}\ y_2^2+y_3^2<\delta)\}.
\end{equation}
\end{lemma}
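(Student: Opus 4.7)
The plan is to split the set $\mathcal{S}$ into its two subregions (one where $x$ is near the $x_1$-axis, the other where $y$ is near the $y_1$-direction) and handle each by exploiting a distinct structural feature of $H_{\hat m}$. The central observation, which I will use throughout, is that if $\hat m(x):=\int_0^{x_1}\!\int_0^s m(t,x_2,x_3)\,dt\,ds$, then the $(1,1)$-entry of $H_{\hat m}(x)$ equals $m(x)$ by the fundamental theorem of calculus, i.e. $\partial^2\hat m/\partial x_1^2=m$. Note also that since $m$ has even degree $d$, the form $\hat m$ has even degree $d+2$, so each entry of $H_{\hat m}$ is a form of even degree $d$; in particular $H_{\hat m}(-x)=H_{\hat m}(x)$.

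The first step is to compute $H_{\hat m}(1,0,0)$. Since $\partial m/\partial x_i$ is a form of degree $d-1$ and $\partial^2 m/\partial x_i\partial x_j$ is a form of degree $d-2$, evaluating at $(t,0,0)$ pulls out $t^{d-1}$ and $t^{d-2}$ respectively, so the iterated integrals at $(1,0,0)$ produce factors $1/d$ and $1/(d(d-1))$. Using Euler's identity applied to $m$ and to $\partial m/\partial x_i$, one gets $m(1,0,0)=(H_m(1,0,0))_{11}/(d(d-1))$ and $(\partial m/\partial x_i)(1,0,0)=(H_m(1,0,0))_{1i}/(d-1)$. Combining these yields the clean identity
\[
H_{\hat m}(1,0,0)\;=\;\frac{1}{d(d-1)}\,H_m(1,0,0),
\]
which is positive definite by {\bf R3}.

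For the first case ($x_2^2+x_3^2<\delta$), the above identity together with the evenness symmetry $H_{\hat m}(-1,0,0)=H_{\hat m}(1,0,0)$ shows $H_{\hat m}$ is positive definite at both $\pm(1,0,0)$. Continuity of $H_{\hat m}$ then gives open neighborhoods of these two points on which the smallest eigenvalue stays above, say, half its value at $(1,0,0)$; by compactness there is some $\delta_1>0$ such that the entire sphere slice $\{\|x\|=1,\ x_2^2+x_3^2<\delta_1\}$ lies inside this union of neighborhoods, and hence $y^TH_{\hat m}(x)y>0$ for every unit $y$ on this slice. For the second case ($y_2^2+y_3^2<\delta$), expand
\[
y^TH_{\hat m}(x)y \;=\; y_1^2\,m(x) \;+\; 2y_1\!\!\sum_{i=2,3}\! y_i\,(H_{\hat m})_{1i}(x) \;+\!\!\sum_{i,j\in\{2,3\}}\!\! y_iy_j\,(H_{\hat m})_{ij}(x).
\]
By {\bf R1} and compactness, $m(x)\ge m_{\min}>0$ on the unit sphere, so the first term is at least $(1-\delta)m_{\min}$. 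Letting $M$ bound all entries of $H_{\hat m}$ on the unit sphere, the remaining terms are bounded in absolute value by $4M\sqrt{\delta}+2M\delta$, which vanishes as $\delta\to 0$. Thus for all sufficiently small $\delta_2>0$ the entire expression is strictly positive on this slice.

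Taking $\delta:=\min(\delta_1,\delta_2)$ yields the desired conclusion. The only nontrivial step is the Euler-identity computation that identifies $H_{\hat m}(1,0,0)$ with a positive multiple of $H_m(1,0,0)$; once this is in hand, both cases reduce to standard continuity/compactness arguments on the compact sets in question.
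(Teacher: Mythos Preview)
Your proof is correct and follows essentially the same approach as the paper: both arguments split $\mathcal{S}$ into the $x$-slice and the $y$-slice, use the identity $H_{\hat m}(1,0,0)=\tfrac{1}{d(d-1)}H_m(1,0,0)$ together with {\bf R3} and continuity for the former, and use $(H_{\hat m})_{11}=m$ together with {\bf R1} and continuity for the latter, then take the smaller $\delta$. Your version is in fact a bit more careful than the paper's---you explicitly handle the point $(-1,0,0)$ via the evenness of the Hessian entries and give quantitative error bounds in the $y$-case, whereas the paper leaves both of these implicit in its continuity appeal.
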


\begin{proof}
We observe that when $y_2^2+y_3^2=0$, we have
$$y^TH_{\hat{m}}(x)y=y_1^2m(x),$$ which by requirement {\bf R1} is
positive when $||x||=||y||=1$. By continuity of the form
$y^TH_{\hat{m}}(x)y$, we conclude that there exists a small
positive constant $\delta_y$ such that $y^TH_{\hat{m}}(x)y>0$ on
the set
$$\mathcal{S}_y\mathrel{\mathop:}=\{(x,y) \ |\  ||x||=1, ||y||=1, \ y_2^2+y_3^2<\delta_y\}.$$
Next, we leave it to the reader to check that
$$H_{\hat{m}}(1,0,0)=\frac{1}{d(d-1)}H_m(1,0,0).$$ Therefore, when $x_2^2+x_3^2=0$, requirement {\bf R3} implies that
$y^TH_{\hat{m}}(x)y$ is positive when $||x||=||y||=~1$. Appealing
to continuity again, we conclude that there exists a small
positive constant $\delta_x$ such that $y^TH_{\hat{m}}(x)y>0$ on
the set
$$\mathcal{S}_x\mathrel{\mathop:}=\{(x,y) \ |\  ||x||=1, ||y||=1, \
x_2^2+x_3^2<\delta_x\}.$$ If we now take
$\delta=\min\{\delta_y,\delta_x\}$, the lemma is established.
\end{proof}

The last lemma that we need has already been proven in our earlier
work in~\cite{AAA_PP_not_sos_convex_journal}.

\begin{lemma}[\cite{AAA_PP_not_sos_convex_journal}]\label{lem:sos.mat.then.minor.sos}
All principal minors of an sos-matrix are sos
polynomials.\footnote{As a side note, we remark that the converse
of Lemma~\ref{lem:sos.mat.then.minor.sos} is not true even for
polynomial matrices that are valid Hessians. For example, all 7
principal minors of the $3\times 3$ Hessian of the form $f$ in
(\ref{eq:minim.form.3.6}) are sos polynomials, even though this
Hessian is not an sos-matrix.}
\end{lemma}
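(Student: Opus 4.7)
The plan is to reduce the claim to two observations: (i) any principal submatrix of an sos-matrix is itself an sos-matrix, and (ii) the determinant of an sos-matrix is an sos polynomial. Together these yield that every principal minor of an sos-matrix is sos.

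For (i), recall from Definition~\ref{def:sos-matrix} that an sos-matrix $U(x) \in \mathbb{R}[x]^{m \times m}$ admits a factorization $U(x) = V(x)^T V(x)$ for some $V(x) \in \mathbb{R}[x]^{s \times m}$. Given an index set $I \subseteq \{1,\ldots,m\}$, let $V_I(x)$ denote the submatrix of $V(x)$ consisting of the columns indexed by $I$. Then the corresponding principal submatrix of $U(x)$ satisfies
\[
U_I(x) = V_I(x)^T V_I(x),
\]
so $U_I(x)$ is again an sos-matrix. Thus it suffices to prove (ii) for the full matrix, since the principal minor in question is just $\det(U_I(x))$.

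For (ii), I would invoke the Cauchy--Binet formula. If $V_I(x)$ has size $s \times |I|$, then
\[
\det\bigl(V_I(x)^T V_I(x)\bigr) = \sum_{S} \det\bigl(V_{I,S}(x)\bigr)^2,
\]
where $S$ ranges over all $|I|$-element subsets of rows of $V_I(x)$ and $V_{I,S}(x)$ denotes the corresponding square submatrix. Each $\det(V_{I,S}(x))$ is a polynomial in $x$ (since the entries of $V_I$ are polynomials and the determinant is a polynomial expression in them), so the right-hand side exhibits $\det(U_I(x))$ explicitly as a sum of squares of polynomials. This completes the proof.

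There is essentially no obstacle here; the only subtlety is remembering that Cauchy--Binet applies verbatim to matrices over the commutative polynomial ring $\mathbb{R}[x]$, which is immediate because the formula is a polynomial identity in the entries of $V$. The proof therefore fits in a few lines once one recognizes that the factorization structure of sos-matrices is inherited by principal submatrices, and that Cauchy--Binet packages the Gram determinant as a sum of squared minors.
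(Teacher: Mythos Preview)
Your proof is correct. The paper itself does not give a proof here but cites earlier work~\cite{AAA_PP_not_sos_convex_journal}, so there is nothing to compare against in this text; your argument via restriction to columns of $V$ followed by Cauchy--Binet is the standard route and goes through without issue (including the degenerate case $|I|>s$, where $\det(U_I)=0$ is trivially sos).
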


We are now ready to prove
Theorem~\ref{thm:conv_not_sos_conv_forms_n3d}.

\begin{proof}[Proof of Theorem~\ref{thm:conv_not_sos_conv_forms_n3d}]
We first prove that the form $f$ in
(\ref{eq:construction.of.f(x)}) is not sos-convex. By
Lemma~\ref{lem:sos.mat.then.minor.sos}, if $f$ was sos-convex,
then all diagonal elements of its Hessian would have to be sos
polynomials. On the other hand, we have from
(\ref{eq:construction.of.f(x)}) that
$$\frac{\partial{f(x)}}{\partial{x_1}\partial{x_1}}=m(x),$$ which by requirement {\bf R2} is not sos. Therefore $f$ is not sos-convex.

It remains to show that there exists a positive value of $\gamma$
for which $f$ becomes convex. Let us denote the Hessians of $f$,
$\int_0^{x_1}\int_0^s m(t,x_2,x_3) dt ds$, and $g$, by $H_f$,
$H_{\hat{m}}$, and $H_g$ respectively. So, we have
$$H_f(x)=H_{\hat{m}}(x)+\gamma H_g(x_2,x_3).$$ (Here, $H_g$ is a
$3\times 3$ matrix whose first row and column are zeros.)
Convexity of $f$ is of course equivalent to nonnegativity of the
form $y^TH_f(x)y$. Since this form is bi-homogeneous in $x$ and
$y$, it is nonnegative if and only if $y^TH_f(x)y\geq0$ on the
bi-sphere
$$\mathcal{B}\mathrel{\mathop:}=\{(x,y) \ | \ ||x||=1,
||y||=1\}.$$ Let us decompose the bi-sphere as
$$\mathcal{B}=\mathcal{S}\cup\bar{\mathcal{S}},$$
where $\mathcal{S}$ is defined in (\ref{eq:the.set.S}) and
\begin{equation}\nonumber
\bar{\mathcal{S}}\mathrel{\mathop:}=\{(x,y) \ |\  ||x||=1,
||y||=1, x_2^2+x_3^2\geq\delta,  y_2^2+y_3^2\geq\delta\}.
\end{equation}
Lemma~\ref{lem:y.Hm.y>0.on.some.part} together with positive
definiteness of $H_g$ imply that $y^TH_f(x)y$ is positive on
$\mathcal{S}$. As for the set $\bar{\mathcal{S}}$, let
\begin{equation}\nonumber
\beta_1=\min_{x,y,\in\bar{\mathcal{S}}} y^TH_{\hat{m}}(x)y,
\end{equation}
and
\begin{equation}\nonumber
\beta_2=\min_{x,y,\in\bar{\mathcal{S}}} y^TH_g(x_2,x_3)y.
\end{equation}
By the assumption of positive definiteness of $H_g$, we have
$\beta_2>0$. If we now let $$\gamma>\frac{|\beta_1|}{\beta_2},$$
then $$\min_{x,y,\in\bar{\mathcal{S}}}
y^TH_f(x)y>\beta_1+\frac{|\beta_1|}{\beta_2}\beta_2\geq0.$$ Hence
$y^TH_f(x)y$ is nonnegative (in fact positive) everywhere on
$\mathcal{B}$ and the proof is completed.
\end{proof}

Finally, we provide an argument for existence of bivariate
polynomials of degree $8,10,12,\ldots$ that are convex but not
sos-convex.

\begin{corollary}\label{cor:bivariate.polys.8.10.12...}
Consider the form $f$ in (\ref{eq:construction.of.f(x)})
constructed as described in
Theorem~\ref{thm:conv_not_sos_conv_forms_n3d}. Let
$$\tilde{f}(x_1,x_2)=f(x_1,x_2,1).$$
Then, $\tilde{f}$ is convex but not sos-convex.
\end{corollary}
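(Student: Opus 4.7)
The plan is to prove the two claims separately, both by reducing to facts already established in Theorem~\ref{thm:conv_not_sos_conv_forms_n3d} and Lemma~\ref{lem:choice.of.m(x)}. The argument is short because $\tilde{f}$ inherits one property by restriction and fails the other through a diagonal-entry obstruction.

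For convexity, I would invoke the fact that the restriction of a convex function to an affine subspace is itself convex (this is the nonhomogeneous analogue of Remark~\ref{rmk:sos-convexity.restriction} on the convexity side). Since Theorem~\ref{thm:conv_not_sos_conv_forms_n3d} guarantees that $f$ is convex on $\mathbb{R}^3$, restricting to the affine plane $\{x_3 = 1\}$ yields convexity of $\tilde{f}$ on $\mathbb{R}^2$. This step is entirely routine.

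For the failure of sos-convexity, I would use Lemma~\ref{lem:sos.mat.then.minor.sos}: if $\tilde{f}$ were sos-convex, every diagonal entry of its $2\times 2$ Hessian $H_{\tilde{f}}$ would be an sos polynomial. By construction of $f$ in equation~(\ref{eq:construction.of.f(x)}), differentiation and the substitution $x_3=1$ commute, so
\[
\frac{\partial^2 \tilde{f}}{\partial x_1^2}(x_1,x_2) \;=\; \left.\frac{\partial^2 f}{\partial x_1^2}\right|_{x_3=1} \;=\; m(x_1,x_2,1).
\]
Thus sos-convexity of $\tilde{f}$ would force the bivariate polynomial $m(x_1,x_2,1)$ to be sos. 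Since homogenization preserves the sos property (as recalled in Subsection~\ref{subsec:nonnegativity.sos.basics}), the homogenization of $m(x_1,x_2,1)$ back to a ternary form of degree $d$ would then be sos as well, contradicting requirement \textbf{R2} on $m$.

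The only point requiring a small check, and the mildest potential obstacle, is confirming that $m(x_1,x_2,1)$ actually has degree $d$ so that its homogenization recovers the original form $m(x_1,x_2,x_3)$ rather than a proper multiple of a lower-degree form padded with powers of $x_3$. This follows from requirement \textbf{R1}: positive definiteness of $m$ implies $m(1,0,0)>0$, and since $m$ is a form of degree $d$ this forces $m(x_1,0,0)=c\,x_1^d$ with $c>0$, so $m(x_1,x_2,1)$ contains the monomial $c\,x_1^d$ and is therefore of degree exactly $d$. With this observation in hand, the homogenization of $m(x_1,x_2,1)$ coincides with $m(x_1,x_2,x_3)$, closing the contradiction and completing the proof.
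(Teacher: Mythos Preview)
Your proof is correct and follows the same line as the paper's: convexity by restriction, and failure of sos-convexity via Lemma~\ref{lem:sos.mat.then.minor.sos} applied to the $(1,1)$ entry $m(x_1,x_2,1)$ of $H_{\tilde f}$. The paper simply asserts that $m(x_1,x_2,1)$ is not sos, whereas you supply the justification (the degree check via \textbf{R1} so that homogenization recovers $m$ and contradicts \textbf{R2}); this extra care is appropriate and fills a small gap the paper leaves implicit.
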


\begin{proof}
The polynomial $\tilde{f}$ is convex because it is the restriction
of a convex function. It is not difficult to see that
$$\frac{\partial{\tilde{f}}(x_1,x_2)}{\partial{x_1}\partial{x_1}}=m(x_1,x_2,1),$$
which is not sos. Therefore from
Lemma~\ref{lem:sos.mat.then.minor.sos} $\tilde{f}$ is not
sos-convex.
\end{proof}

Corollary~\ref{cor:bivariate.polys.8.10.12...} together with the
two polynomials in $\tilde{C}_{2,6}\setminus\tilde{\Sigma
C}_{2,6}$ and $\tilde{C}_{3,4}\setminus\tilde{\Sigma C}_{3,4}$
presented in (\ref{eq:minim.poly.2.6}) and
(\ref{eq:minim.poly.3.4}), and with the simple procedure for
increasing the number of variables described at the beginning of
Subsection~\ref{subsubsec:increasing.degree.and.vars} cover all
the values of $n$ and $d$ for which convex but not sos-convex
polynomials exist.

\section{Concluding remarks and an open problem}\label{sec:concluding.remarks}

To conclude our paper, we would like to point out some
similarities between nonnegativity and convexity that deserve
attention: (i) both nonnegativity and convexity are properties
that only hold for even degree polynomials, (ii) for quadratic
forms, nonnegativity is in fact equivalent to convexity, (iii)
both notions are NP-hard to check exactly for degree 4 and larger,
and most strikingly (iv) nonnegativity is equivalent to sum of
squares \emph{exactly} in dimensions and degrees where convexity
is equivalent to sos-convexity. It is unclear to us whether there
can be a deeper and more unifying reason explaining these
observations, in particular, the last one which was the main
result of this paper.

Another intriguing question is to investigate whether one can give
a direct argument proving the fact that $\tilde{\Sigma
C}_{n,d}=\tilde{C}_{n,d}$ if and only if $\Sigma
C_{n+1,d}=C_{n+1,d}$. This would eliminate the need for studying
polynomials and forms separately, and in particular would provide
a short proof of the result $\Sigma_{3,4}=C_{3,4}$ given
in~\cite{AAA_GB_PP_Convex_ternary_quartics}.

Finally, an open problem related to this work is to \emph{find an
explicit example of a convex form that is not a sum of squares}.
Blekherman~\cite{Blekherman_convex_not_sos} has shown via volume
arguments that for degree $d\geq 4$ and asymptotically for large
$n$ such forms must exist, although no examples are known. In
particular, it would \aaa{be} interesting to determine the
smallest value of $n$ for which such a form exists. We know from
Lemma~\ref{lem:helton.nie.sos-convex.then.sos} that a convex form
that is not sos must necessarily be not sos-convex. Although our
several constructions of convex but not sos-convex polynomials
pass this necessary condition, the polynomials themselves are all
sos. The question is particularly interesting from an optimization
viewpoint because it implies that the well-known sum of squares
relaxation for minimizing
polynomials~\cite{Shor},~\cite{Minimize_poly_Pablo} \aaa{is not
always} exact even for the easy case of minimizing convex
polynomials.

\bibliographystyle{abbrv}
\bibliography{pablo_amirali}

\def\cprime{$'$}
\begin{thebibliography}{10}

\bibitem{AAA_GB_PP_Convex_ternary_quartics}
A.~A. Ahmadi, G.~Blekherman, and P.~A. Parrilo.
\newblock Convex ternary quartics are sos-convex.
\newblock In preparation, 2011.

\bibitem{NPhard_Convexity_arxiv}
A.~A. Ahmadi, A.~Olshevsky, P.~A. Parrilo, and J.~N. Tsitsiklis.
\newblock {NP}-hardness of deciding convexity of quartic polynomials and
  related problems.
\newblock {\em Mathematical Programming}, 2011.
\newblock DOI: 10.1007/s10107-011-0499-2.

\bibitem{AAA_PP_CDC09_HessianNotFactor}
A.~A. Ahmadi and P.~A. Parrilo.
\newblock A positive definite polynomial {H}essian that does not factor.
\newblock In {\em Proceedings of the 48$^{th}$ IEEE Conference on Decision and
  Control}, 2009.

\bibitem{AAA_PP_CDC10_algeb_convex}
A.~A. Ahmadi and P.~A. Parrilo.
\newblock On the equivalence of algebraic conditions for convexity and
  quasiconvexity of polynomials.
\newblock In {\em Proceedings of the 49$^{th}$ IEEE Conference on Decision and
  Control}, 2010.

\bibitem{AAA_PP_not_sos_convex_journal}
A.~A. Ahmadi and P.~A. Parrilo.
\newblock A convex polynomial that is not sos-convex.
\newblock {\em Mathematical Programming}, 2011.
\newblock DOI: 10.1007/s10107-011-0457-z.

\bibitem{SOS_KYP}
E.~M. Aylward, S.~M. Itani, and P.~A. Parrilo.
\newblock Explicit {SOS} decomposition of univariate polynomial matrices and
  the {K}alman-{Y}akubovich-{P}opov lemma.
\newblock In {\em Proceedings of the 46$^{th}$ IEEE Conference on Decision and
  Control}, 2007.

\bibitem{Blekherman_convex_not_sos}
G.~Blekherman.
\newblock Convex forms that are not sums of squares.
\newblock arXiv:0910.0656., 2009.

\bibitem{Blekherman_nonnegative_and_sos}
G.~Blekherman.
\newblock Nonnegative polynomials and sums of squares.
\newblock To appear in the {J}ournal of the {AMS}. Online version available at
  arXiv:1010.3465, 2010.

\bibitem{Symmetric_quartics_sos}
G.~Blekherman and C.~B. Riener.
\newblock Symmetric nonnegative forms and sums of squares.
\newblock arXiv:1205.3102, 2012.

\bibitem{BoydBook}
S.~Boyd and L.~Vandenberghe.
\newblock {\em Convex Optimization}.
\newblock Cambridge University Press, 2004.

\bibitem{Chesi_Hung_journal}
G.~Chesi and Y.~S. Hung.
\newblock Establishing convexity of polynomial {L}yapunov functions and their
  sublevel sets.
\newblock {\em IEEE Trans. Automat. Control}, 53(10):2431--2436, 2008.

\bibitem{Choi_Biquadratic}
M.~D. Choi.
\newblock Positive semidefinite biquadratic forms.
\newblock {\em Linear Algebra and its Applications}, 12:95--100, 1975.

\bibitem{Choi_Lam_extremalPSDforms}
M.~D. Choi and T.~Y. Lam.
\newblock Extremal positive semidefinite forms.
\newblock {\em Math. Ann.}, 231:1--18, 1977.

\bibitem{CLRrealzeros}
M.~D. Choi, T.-Y. Lam, and B.~Reznick.
\newblock Real zeros of positive semidefinite forms. {I}.
\newblock {\em Math. Z.}, 171(1):1--26, 1980.

\bibitem{Monique_Etienne_Convex}
E.~de~Klerk and M.~Laurent.
\newblock On the {L}asserre hierarchy of semidefinite programming relaxations
  of convex polynomial optimization problems.
\newblock {\em SIAM Journal on Optimization}, 21:824--832, 2011.

\bibitem{Even_quartics_4vars_sos}
P.~H. Diananda.
\newblock On non-negative forms in real variables some or all of which are
  non-negative.
\newblock {\em Proceedings of the {C}ambridge {P}hilosophical {S}ociety},
  58:17--25, 1962.

\bibitem{Pablo_Sep_Entang_States}
A.~C. Doherty, P.~A. Parrilo, and F.~M. Spedalieri.
\newblock Distinguishing separable and entangled states.
\newblock {\em Physical {R}eview {L}etters}, 88(18), 2002.

\bibitem{Symmetry_groups_Gatermann_Pablo}
K.~Gatermann and P.~A. Parrilo.
\newblock Symmetry groups, semidefinite programs, and sums of squares.
\newblock {\em Journal of Pure and Applied Algebra}, 192:95--128, 2004.

\bibitem{Stability_number_SOS}
N.~Gvozdenovi\'c and M.~Laurent.
\newblock Semidefinite bounds for the stability number of a graph via sums of
  squares of polynomials.
\newblock {\em Mathematical Programming}, 110(1):145--173, 2007.

\bibitem{Helton_Nie_SDP_repres_2}
J.~W. Helton and J.~Nie.
\newblock Semidefinite representation of convex sets.
\newblock {\em Mathematical Programming}, 122(1, Ser. A):21--64, 2010.

\bibitem{PositivePolyInControlBook}
D.~Henrion and A.~Garulli, editors.
\newblock {\em Positive polynomials in control}, volume 312 of {\em Lecture
  Notes in Control and Information Sciences}.
\newblock Springer, 2005.

\bibitem{Hilbert_1888}
D.~Hilbert.
\newblock \"{U}ber die {D}arstellung {D}efiniter {F}ormen als {S}umme von
  {F}ormenquadraten.
\newblock {\em Math. Ann.}, 32, 1888.

\bibitem{Kojima_SOS_matrix}
M.~Kojima.
\newblock Sums of squares relaxations of polynomial semidefinite programs.
\newblock {\em Research report B-397, Dept. of Mathematical and Computing
  Sciences. Tokyo Institute of Technology}, 2003.

\bibitem{Lasserre_Jensen_inequality}
J.~B. Lasserre.
\newblock Convexity in semialgebraic geometry and polynomial optimization.
\newblock {\em SIAM Journal on Optimization}, 19(4):1995--2014, 2008.

\bibitem{Lasserre_Convex_Positive}
J.~B. Lasserre.
\newblock {Representation of nonnegative convex polynomials}.
\newblock {\em Archiv der Mathematik}, 91(2):126--130, 2008.

\bibitem{Lasserre_set_convexity}
J.~B. Lasserre.
\newblock Certificates of convexity for basic semi-algebraic sets.
\newblock {\em Applied Mathematics Letters}, 23(8):912--916, 2010.

\bibitem{yalmip}
J.~L\"ofberg.
\newblock Yalmip : A toolbox for modeling and optimization in {MATLAB}.
\newblock In {\em Proceedings of the CACSD Conference}, 2004.
\newblock Available from
  \texttt{http://control.ee.ethz.ch/\~{}joloef/yalmip.php}.

\bibitem{convex_fitting}
A.~Magnani, S.~Lall, and S.~Boyd.
\newblock Tractable fitting with convex polynomials via sum of squares.
\newblock In {\em Proceedings of the 44$^{th}$ IEEE Conference on Decision and
  Control}, 2005.

\bibitem{MotzkinSOS}
T.~S. Motzkin.
\newblock The arithmetic-geometric inequality.
\newblock In {\em Inequalities ({P}roc. {S}ympos. {W}right-{P}atterson {A}ir
  {F}orce {B}ase, {O}hio, 1965)}, pages 205--224. Academic Press, New York,
  1967.

\bibitem{nonnegativity_NP_hard}
K.~G. Murty and S.~N. Kabadi.
\newblock Some {NP}-complete problems in quadratic and nonlinear programming.
\newblock {\em Mathematical Programming}, 39:117--129, 1987.

\bibitem{open_complexity}
P.~M. Pardalos and S.~A. Vavasis.
\newblock Open questions in complexity theory for numerical optimization.
\newblock {\em Mathematical Programming}, 57(2):337--339, 1992.

\bibitem{PhD:Parrilo}
P.~A. Parrilo.
\newblock {\em Structured semidefinite programs and semialgebraic geometry
  methods in robustness and optimization}.
\newblock PhD thesis, California Institute of Technology, May 2000.

\bibitem{sdprelax}
P.~A. Parrilo.
\newblock Semidefinite programming relaxations for semialgebraic problems.
\newblock {\em Mathematical Programming}, 96(2, Ser. B):293--320, 2003.

\bibitem{Pablo_poly_games}
P.~A. Parrilo.
\newblock Polynomial games and sum of squares optimization.
\newblock In {\em Proceedings of the 45$^{th}$ IEEE Conference on Decision and
  Control}, 2006.

\bibitem{Minimize_poly_Pablo}
P.~A. Parrilo and B.~Sturmfels.
\newblock Minimizing polynomial functions.
\newblock {\em Algorithmic and Quantitative Real Algebraic Geometry, DIMACS
  Series in Discrete Mathematics and Theoretical Computer Science}, 60:83--99,
  2003.

\bibitem{Scheiderer_ternary_quartic}
A.~Pfister and C.~Scheiderer.
\newblock An elementary proof of {H}ilbert's theorem on ternary quartics.
\newblock arXiv:1009.3144, 2010.

\bibitem{NewApproach_Hilbert_Ternary_Quatrics}
V.~Powers, B.~Reznick, C.~Scheiderer, and F.~Sottile.
\newblock A new approach to {H}ilbert's theorem on ternary quartics.
\newblock {\em Comptes Rendus Mathematique}, 339(9):617 -- 620, 2004.

\bibitem{sostools}
S.~Prajna, A.~Papachristodoulou, and P.~A. Parrilo.
\newblock {\em {SOSTOOLS}: Sum of squares optimization toolbox for {MATLAB}},
  2002-05.
\newblock Available from \texttt{http://www.cds.caltech.edu/sostools} and
  \texttt{http://www.mit.edu/\~{}parrilo/sostools}.

\bibitem{Reznick_Unif_denominator}
B.~Reznick.
\newblock Uniform denominators in {H}ilbert's 17th problem.
\newblock {\em Math Z.}, 220(1):75--97, 1995.

\bibitem{Reznick}
B.~Reznick.
\newblock Some concrete aspects of {H}ilbert's 17th problem.
\newblock In {\em Contemporary Mathematics}, volume 253, pages 251--272.
  American Mathematical Society, 2000.

\bibitem{Reznick_Hilbert_construciton}
B.~Reznick.
\newblock On {H}ilbert's construction of positive polynomials.
\newblock arXiv:0707.2156., 2007.

\bibitem{Blenders_Reznick}
B.~Reznick.
\newblock Blenders.
\newblock arXiv:1008.4533, 2010.

\bibitem{RobinsonSOS}
R.~M. Robinson.
\newblock Some definite polynomials which are not sums of squares of real
  polynomials.
\newblock In {\em Selected questions of algebra and logic (collection dedicated
  to the memory of {A}. {I}. {M}al\cprime cev) ({R}ussian)}, pages 264--282.
  Izdat. ``Nauka'' Sibirsk. Otdel., Novosibirsk, 1973.

\bibitem{Rudin_RealComplexAnalysis}
W.~Rudin.
\newblock {\em Real and complex analysis}.
\newblock Mc{G}raw-{H}ill series in higher mathematics, 1987.
\newblock Third edition.

\bibitem{matrix_sos_Hol}
C.~W. Scherer and C.~W.~J. Hol.
\newblock Matrix sum of squares relaxations for robust semidefinite programs.
\newblock {\em Mathematical Programming}, 107:189--211, 2006.

\bibitem{Shor}
N.~Z. Shor.
\newblock Class of global minimum bounds of polynomial functions.
\newblock {\em Cybernetics}, 23(6):731--734, 1987.
\newblock (Russian orig.: Kibernetika, No. 6, (1987), 9--11).

\bibitem{sedumi}
J.~Sturm.
\newblock {\em {SeDuMi} version 1.05}, Oct. 2001.
\newblock Latest version available at \texttt{http://sedumi.ie.lehigh.edu/}.

\bibitem{Tits_lec.notes}
A.~L. Tits.
\newblock Lecture notes on optimal control.
\newblock Available from \texttt{http://www.isr.umd.edu/\~{}andre/664.pdf},
  2008.

\bibitem{VaB:96}
L.~Vandenberghe and S.~Boyd.
\newblock Semidefinite programming.
\newblock {\em SIAM Review}, 38(1):49--95, Mar. 1996.

\end{thebibliography}

\appendix

\section{Certificates complementing the proof of Theorem~\ref{thm:minimal.2.6.and.3.6}}
Let $x\mathrel{\mathop:}=(x_1,x_2,x_2)^T$,
$y\mathrel{\mathop:}=(y_1,y_2,y_3)^T$,
$\tilde{x}\mathrel{\mathop:}=(x_1,x_2)^T$,
$\tilde{y}\mathrel{\mathop:}=(y_1,y_2)^T$, and let $f, \tilde{f},
H_f,$ and $H_{\tilde{f}}$ be as in the proof of
Theorem~\ref{thm:minimal.2.6.and.3.6}. This appendix proves that
the form $(x_1^2+x_2^2)\cdot y^TH_f(x)y$ in
(\ref{eq:y.H_f.y.xi^2.3.6.example}) is sos and that the polynomial
$\tilde{y}^TH_{\tilde{f}}(\tilde{x})\tilde{y}$ in
(\ref{eq:y.H_f_tilda.y.xi^2.2.6.example}) is not sos, hence
proving respectively that $f$ is convex and $\tilde{f}$ is not
sos-convex.

A rational sos decomposition of $(x_1^2+x_2^2)\cdot y^TH_f(x)y$,
which is a form in $6$ variables of degree $8$, is as follows:
\begin{equation}\nonumber
(x_1^2+x_2^2)\cdot y^TH_f(x)y=\frac{1}{84}z^TQz,
\end{equation}
where $z$ is the vector of monomials
\begin{equation}\nonumber
\begin{array}{ll}
z=&[  x_2x_3^2y_3,
  x_2x_3^2y_2,
  x_2x_3^2y_1,
  x_2^2x_3y_3,
  x_2^2x_3y_2,
  x_2^2x_3y_1,
     x_2^3y_3,
     x_2^3y_2,
     x_2^3y_1,\\
\ & \ \\
\ &    x_1x_3^2y_3,
  x_1x_3^2y_2,
  x_1x_3^2y_1,
 x_1x_2x_3y_3,
 x_1x_2x_3y_2,
 x_1x_2x_3y_1,
  x_1x_2^2y_3,
  x_1x_2^2y_2,
  x_1x_2^2y_1,\\
\ &  \ \\
\ &   x_1^2x_3y_3,
  x_1^2x_3y_2,
  x_1^2x_3y_1,
  x_1^2x_2y_3,
  x_1^2x_2y_2,
  x_1^2x_2y_1,
     x_1^3y_3,
     x_1^3y_2,
     x_1^3y_1]^T,
\end{array}
\end{equation}
and $Q$ is the $27\times 27$ positive definite
matrix\footnote{Whenever we state a matrix is positive definite,
this claim is backed up by a rational $LDL^T$ factorization of the
matrix that the reader can find online at
\texttt{http://aaa.lids.mit.edu/software} or at
\aaan{\texttt{http://arxiv.org/abs/1111.4587}}.} presented on the
next page
\begin{equation}\nonumber
Q=\begin{bmatrix} Q_1 & Q_2
\end{bmatrix},
\end{equation}

\newpage
\begin{landscape}
\setcounter{MaxMatrixCols}{30} \scalefont{.6}
\begin{equation}\nonumber
Q_1=\begin{bmatrix}[r]224280  & -40740  & 20160  & -81480  & 139692  & 93576  & -50540  & -27804  & -48384  & 0  & -29400  & -32172  & 21252  & 103404   \\
-40740  & 63504  & 36624  & 114324  & -211428  & -8316  & -67704  & 15372  & -47376  & 29400  & 0  & 16632  & 75936  & 15540   \\
20160  & 36624  & 121128  & 52920  & -27972  & -93072  & -42252  & -77196  & -58380  & 32172  & -16632  & 0  & 214284  & -57960   \\
-81480  & 114324  & 52920  & 482104  & -538776  & 36204  & -211428  & 362880  & -70644  & 19068  & 13524  & -588  & 179564  & 20258   \\
139692  & -211428  & -27972  & -538776  & 1020600  & -94416  & 338016  & -288120  & 188748  & -46368  & -33684  & -46620  & -113638  & -119112   \\
93576  & -8316  & -93072  & 36204  & -94416  & 266448  & -75348  & 216468  & 5208  & 3360  & -33432  & -31080  & -221606  & 254534   \\
-50540  & -67704  & -42252  & -211428  & 338016  & -75348  & 175224  & -144060  & 101304  & -20692  & 7826  & -4298  & -77280  & -108192   \\
-27804  & 15372  & -77196  & 362880  & -288120  & 216468  & -144060  & 604800  & 28560  & -35350  & -840  & -35434  & -132804  & 134736   \\
-48384  & -47376  & -58380  & -70644  & 188748  & 5208  & 101304  & 28560  & 93408  & -21098  & 2786  & -11088  & -104496  & -22680   \\
0  & 29400  & 32172  & 19068  & -46368  & 3360  & -20692  & -35350  & -21098  & 224280  & -40740  & 20160  & 35028  & 89964   \\
-29400  & 0  & -16632  & 13524  & -33684  & -33432  & 7826  & -840  & 2786  & -40740  & 63504  & 36624  & 51828  & -196476   \\
-32172  & 16632  & 0  & -588  & -46620  & -31080  & -4298  & -35434  & -11088  & 20160  & 36624  & 121128  & 29148  & -9408   \\
21252  & 75936  & 214284  & 179564  & -113638  & -221606  & -77280  & -132804  & -104496  & 35028  & 51828  & 29148  & 782976  & -463344   \\
103404  & 15540  & -57960  & 20258  & -119112  & 254534  & -108192  & 134736  & -22680  & 89964  & -196476  & -9408  & -463344  & 1167624   \\
267456  & -48132  & 27552  & -49742  & 78470  & 124236  & -100464  & 61404  & -90384  & 55524  & -50064  & -145908  & 41016  & -15456   \\
60872  & -25690  & -142478  & 22848  & -113820  & 259980  & -72996  & 237972  & 20412  & -95580  & -47964  & -27780  & -438732  & 514500   \\
37730  & -99036  & -10150  & -83160  & 473088  & 34188  & 167244  & 57120  & 159264  & 10752  & -93048  & -183540  & 230832  & -49980   \\
-119210  & 9170  & 81648  & 244356  & -41664  & -194124  & -9996  & 214368  & 19152  & -89184  & 2940  & -48480  & 204708  & -85344   \\
-116508  & 81564  & 26124  & 155832  & -308280  & -78180  & -74088  & 14616  & -49644  & 40320  & 87108  & 225456  & 135744  & 8568   \\
30660  & -14952  & 11844  & -21420  & 62604  & 14364  & 13608  & 1176  & 5124  & 59388  & -18144  & -99624  & 31332  & -178248   \\
35700  & 11340  & 52836  & -70788  & 86184  & 9396  & 12264  & -108024  & -11256  & 259056  & -86520  & -3528  & -19334  & 142128   \\
102156  & 2856  & 64536  & 22176  & -4200  & 77532  & -70896  & 54348  & -49616  & 72744  & -78876  & -144998  & 29316  & 23856   \\
-86100  & 47148  & 71820  & 230916  & -223692  & -131628  & -72156  & 59640  & -31416  & -75096  & -39396  & -44520  & 158508  & 308196   \\
95364  & -504  & -8412  & -23100  & 28140  & 81648  & -26768  & -25200  & -13944  & -51002  & -39228  & 71232  & 130298  & 298956   \\
11256  & 5208  & 32158  & -33264  & 45444  & 3122  & 6888  & -34440  & -5628  & 61320  & -19152  & 8988  & 18060  & -19467   \\
0  & -1344  & -3696  & -34692  & 33768  & 5964  & 9492  & -20244  & 5208  & -30072  & -9912  & 58884  & -50883  & 151956   \\
-51422  & 49056  & 32592  & 160370  & -229068  & -36792  & -68796
& 57708  & -39564  & 55944  & 31164  & -8008  & 141876  & -126483
\end{bmatrix},
\end{equation}
\normalsize

\setcounter{MaxMatrixCols}{30} \scalefont{.6}
\begin{equation}\nonumber
Q_2=\begin{bmatrix}[r]267456  & 60872  & 37730  & -119210  & -116508  & 30660  & 35700  & 102156  & -86100  & 95364  & 11256  & 0  & -51422   \\
-48132  & -25690  & -99036  & 9170  & 81564  & -14952  & 11340  & 2856  & 47148  & -504  & 5208  & -1344  & 49056   \\
27552  & -142478  & -10150  & 81648  & 26124  & 11844  & 52836  & 64536  & 71820  & -8412  & 32158  & -3696  & 32592   \\
-49742  & 22848  & -83160  & 244356  & 155832  & -21420  & -70788  & 22176  & 230916  & -23100  & -33264  & -34692  & 160370   \\
78470  & -113820  & 473088  & -41664  & -308280  & 62604  & 86184  & -4200  & -223692  & 28140  & 45444  & 33768  & -229068   \\
124236  & 259980  & 34188  & -194124  & -78180  & 14364  & 9396  & 77532  & -131628  & 81648  & 3122  & 5964  & -36792   \\
-100464  & -72996  & 167244  & -9996  & -74088  & 13608  & 12264  & -70896  & -72156  & -26768  & 6888  & 9492  & -68796   \\
61404  & 237972  & 57120  & 214368  & 14616  & 1176  & -108024  & 54348  & 59640  & -25200  & -34440  & -20244  & 57708   \\
-90384  & 20412  & 159264  & 19152  & -49644  & 5124  & -11256  & -49616  & -31416  & -13944  & -5628  & 5208  & -39564   \\
55524  & -95580  & 10752  & -89184  & 40320  & 59388  & 259056  & 72744  & -75096  & -51002  & 61320  & -30072  & 55944   \\
-50064  & -47964  & -93048  & 2940  & 87108  & -18144  & -86520  & -78876  & -39396  & -39228  & -19152  & -9912  & 31164   \\
-145908  & -27780  & -183540  & -48480  & 225456  & -99624  & -3528  & -144998  & -44520  & 71232  & 8988  & 58884  & -8008   \\
41016  & -438732  & 230832  & 204708  & 135744  & 31332  & -19334  & 29316  & 158508  & 130298  & 18060  & -50883  & 141876   \\
-15456  & 514500  & -49980  & -85344  & 8568  & -178248  & 142128  & 23856  & 308196  & 298956  & -19467  & 151956  & -126483   \\
610584  & 21840  & 127932  & -65184  & -323834  & 195636  & 90972  & 339794  & -100716  & -96012  & 24864  & -114219  & 36876   \\
21840  & 466704  & -110628  & -106820  & -54012  & -90636  & -111790  & -14952  & 63672  & 107856  & -67788  & 61404  & -88284   \\
127932  & -110628  & 1045968  & 142632  & -410592  & 171024  & 86268  & 176820  & 96516  & 199752  & 13524  & -70784  & -42756   \\
-65184  & -106820  & 142632  & 569856  & 21518  & -30156  & -159264  & -23016  & 410004  & -71484  & -62076  & -13860  & 74032   \\
-323834  & -54012  & -410592  & 21518  & 604128  & -229992  & -75516  & -297276  & 182385  & 75684  & -3528  & 94500  & 138432   \\
195636  & -90636  & 171024  & -30156  & -229992  & 169512  & 104748  & 187341  & -136332  & -145719  & 35364  & -94836  & 24612   \\
90972  & -111790  & 86268  & -159264  & -75516  & 104748  & 381920  & 147168  & -182595  & -36876  & 105504  & -24612  & -7560   \\
339794  & -14952  & 176820  & -23016  & -297276  & 187341  & 147168  & 346248  & -59304  & -137928  & 64932  & -90888  & 28392   \\
-100716  & 63672  & 96516  & 410004  & 182385  & -136332  & -182595  & -59304  & 776776  & 48972  & -98784  & 19152  & 180852   \\
-96012  & 107856  & 199752  & -71484  & 75684  & -145719  & -36876  & -137928  & 48972  & 494536  & -28392  & 118188  & -130200   \\
24864  & -67788  & 13524  & -62076  & -3528  & 35364  & 105504  & 64932  & -98784  & -28392  & 60984  & 0  & -3780   \\
-114219  & 61404  & -70784  & -13860  & 94500  & -94836  & -24612  & -90888  & 19152  & 118188  & 0  & 74760  & -65100   \\
36876  & -88284  & -42756  & 74032  & 138432  & 24612  & -7560  &
28392  & 180852  & -130200  & -3780  & -65100  & 194040
\end{bmatrix}.
\end{equation}
\normalsize
\end{landscape}

\newpage

Next, we prove that the polynomial
$\tilde{y}^TH_{\tilde{f}}(\tilde{x})\tilde{y}$ in
(\ref{eq:y.H_f_tilda.y.xi^2.2.6.example}) is not sos. Let us first
present this polynomial and give it a name:

\begin{equation}\nonumber
\begin{array}{lll}
t(\tilde{x},\tilde{y})\mathrel{\mathop:}=\tilde{y}^TH_{\tilde{f}}(\tilde{x})\tilde{y}&=&294x_1x_2y_2^2-6995x_2^4y_1y_2-10200x_1y_1y_2-4356x_1^2x_2y_1^2-2904x_1^3y_1y_2
\\ \\ \ &\ &-11475x_1x_2^2y_1^2+13680x_2^3y_1y_2+4764x_1x_2y_1^2+4764x_1^2y_1y_2+6429x_1^2x_2^2y_1^2\\\\ \ &\ &+294x_2^2y_1y_2
-13990x_1x_2^3y_2^2-12123x_1^2x_2y_2^2-3872x_2y_1y_2+\frac{2143}{2}x_1^4y_2^2\\
\\ \ &\ &+20520x_1x_2^2y_2^2+29076x_1x_2y_1y_2
-24246x_1x_2^2y_1y_2+14901x_1x_2^3y_1y_2\\ \\ \ &\
&+15039x_1^2x_2^2y_1y_2+8572x_1^3x_2y_1y_2
+\frac{44703}{4}x_1^2x_2^2y_2^2+5013x_1^3x_2y_2^2\\ \\ \ &\
&+632y_1y_2-12360x_2y_2^2
-5100x_2y_1^2+\frac{147513}{4}x_2^2y_2^2+7269x_2^2y_1^2-45025x_2^3y_2^2\\
\\ \ &\ &+\frac{772965}{32}x_2^4y_2^2
+\frac{14901}{8}x_2^4y_1^2-1936x_1y_2^2-84x_1y_1^2+\frac{3817}{2}y_2^2+1442y_1^2\\
\\ \ &\ &+7269x_1^2y_2^2+4356x_1^2y_1^2-3825x_1^3y_2^2
-180x_1^3y_1^2+2310x_1^4y_1^2\\ \\ \ &\
&+5013x_1x_2^3y_1^2-22950x_1^2x_2y_1y_2-1505x_1^4y_1y_2-4041x_2^3y_1^2-3010x_1^3x_2y_1^2.
\end{array}
\end{equation}
Note that $t$ is a polynomial in $4$ variables of degree $6$ that
is quadratic in $\tilde{y}$. Let us denote the cone of sos
polynomials in $4$ variables $(\tilde{x},\tilde{y})$ that have
degree $6$ and are quadratic in $\tilde{y}$ by
$\hat{\Sigma}_{4,6}$, and its dual cone by $\hat{\Sigma}_{4,6}^*$.
Our proof will simply proceed by presenting a dual functional
$\xi\in\hat{\Sigma}_{4,6}^*$ that takes a negative value on the
polynomial $t$. We fix the following ordering of monomials in what
follows:
\begin{equation}\label{eq:monomial.ordering.t}
\begin{array}{ll}
v=&[          y_2^2,
             y_1y_2,
              y_1^2,
           x_2y_2^2,
          x_2y_1y_2,
           x_2y_1^2,
         x_2^2y_2^2,
        x_2^2y_1y_2,
         x_2^2y_1^2,
         x_2^3y_2^2,
        x_2^3y_1y_2,
         x_2^3y_1^2,
         x_2^4y_2^2,
        x_2^4y_1y_2,
         x_2^4y_1^2, \\
         \ &           x_1y_2^2,
          x_1y_1y_2,
           x_1y_1^2,
        x_1x_2y_2^2,
       x_1x_2y_1y_2,
        x_1x_2y_1^2,
      x_1x_2^2y_2^2,
     x_1x_2^2y_1y_2,
      x_1x_2^2y_1^2,
      x_1x_2^3y_2^2,
     x_1x_2^3y_1y_2,\\
         \ &      x_1x_2^3y_1^2,
         x_1^2y_2^2,
        x_1^2y_1y_2,
         x_1^2y_1^2,
           x_1^2x_2y_2^2,
     x_1^2x_2y_1y_2,
      x_1^2x_2y_1^2,
    x_1^2x_2^2y_2^2,
   x_1^2x_2^2y_1y_2,
    x_1^2x_2^2y_1^2,
         x_1^3y_2^2,\\
         \ &             x_1^3y_1y_2,
           x_1^3y_1^2,
      x_1^3x_2y_2^2,
     x_1^3x_2y_1y_2,
      x_1^3x_2y_1^2,
         x_1^4y_2^2,
        x_1^4y_1y_2,
         x_1^4y_1^2]^T.
\end{array}
\end{equation}
Let $\vec{t}$ represent the vector of coefficients of $t$ ordered
according to the list of monomials above; i.e., $t=\vec{t}^Tv$.
Using the same ordering, we can represent our dual functional
$\xi$ with the vector
\begin{equation}\nonumber
\begin{array}{ll}
c=&[   19338,
       -2485,
       17155,
        6219,
       -4461,
       11202,
        4290,
       -5745,
       13748,
        3304,
       -5404,
       13227,
        3594,\\
         \ &       -4776,
       19284,
               2060,
        3506,
        5116,
         366,
       -2698,
        6231,
        -487,
       -2324,
        4607,
         369,
       -3657,
        3534,
        6122,\\
         \ &
         659,
        7057,
                 1646,
        1238,
        1752,
        2797,
        -940,
        4608,
        -200,
        1577,
       -2030,
        -513,
       -3747,
             2541,
               15261,\\
         \ &                220,
        7834]^T.
\end{array}
\end{equation}
We have
\begin{equation}\nonumber
\langle\xi,t\rangle=c^{T}\vec{t}=-\frac{364547}{16}<0.
\end{equation}
On the other hand, we claim that $\xi\in\hat{\Sigma}_{4,6}^*$;
i.e., for any form $w\in\hat{\Sigma}_{4,6}$, we should have
\begin{equation}\label{eq:c.w>=0.t}
\langle\xi,w\rangle=c^{T}\vec{w}\geq0,
\end{equation}
where $\vec{w}$ here denotes the coefficients of $w$ listed
according to the ordering in (\ref{eq:monomial.ordering.t}).
Indeed, if $w$ is sos, then it can be written in the form
\[
w(x)=\tilde{z}^{T}\tilde{Q}\tilde{z}= \mathrm{Tr} \ \tilde{Q}
\cdot \tilde{z}\tilde{z}^{T},
\]
for some symmetric positive semidefinite matrix $\tilde{Q}$, and a
vector of monomials
\[
\tilde{z}=[  y_2,
     y_1,
     x_2y_2,
     x_2y_1,
     x_1y_2,
     x_1y_1,
     x_2^2y_2,
     x_2^2y_1,
     x_1x_2y_2,
     x_1x_2y_1,
     x_1^2y_2,
     x_1^2y_1   ]^T.
\]
It is not difficult to see that
\begin{equation}\label{eq:c.vec(w)=traceQzzz'.t}
c^{T}\vec{w}= \mathrm{Tr}  \, \tilde{Q} \cdot
(\tilde{z}\tilde{z}^{T}) \vert_c,
\end{equation}
where by $(\tilde{z}\tilde{z}^{T})\vert_c$ we mean a matrix where
each monomial in $\tilde{z}\tilde{z}^{T}$ is replaced with the
corresponding element of the vector $c$. This yields the matrix
\setcounter{MaxMatrixCols}{30} \scalefont{.8}
\[(\tilde{z}\tilde{z}^{T})\vert_c=
\begin{bmatrix}[r]
19338 & -2485 & 6219 & -4461 & 2060 & 3506 & 4290 & -5745 & 366 & -2698 & 6122 & 659 \\
-2485 & 17155 & -4461 & 11202 & 3506 & 5116 & -5745 & 13748 & -2698 & 6231 & 659 & 7057 \\
6219 & -4461 & 4290 & -5745 & 366 & -2698 & 3304 & -5404 & -487 & -2324 & 1646 & 1238 \\
-4461 & 11202 & -5745 & 13748 & -2698 & 6231 & -5404 & 13227 & -2324 & 4607 & 1238 & 1752 \\
2060 & 3506 & 366 & -2698 & 6122 & 659 & -487 & -2324 & 1646 & 1238 & -200 & 1577 \\
3506 & 5116 & -2698 & 6231 & 659 & 7057 & -2324 & 4607 & 1238 & 1752 & 1577 & -2030 \\
4290 & -5745 & 3304 & -5404 & -487 & -2324 & 3594 & -4776 & 369 & -3657 & 2797 & -940 \\
-5745 & 13748 & -5404 & 13227 & -2324 & 4607 & -4776 & 19284 & -3657 & 3534 & -940 & 4608 \\
366 & -2698 & -487 & -2324 & 1646 & 1238 & 369 & -3657 & 2797 & -940 & -513 & -3747 \\
-2698 & 6231 & -2324 & 4607 & 1238 & 1752 & -3657 & 3534 & -940 & 4608 & -3747 & 2541 \\
6122 & 659 & 1646 & 1238 & -200 & 1577 & 2797 & -940 & -513 & -3747 & 15261 & 220 \\
659 & 7057 & 1238 & 1752 & 1577 & -2030 & -940 & 4608 & -3747 &
2541 & 220 & 7834
\end{bmatrix},
\] \normalsize
which is positive definite. Therefore, equation
(\ref{eq:c.vec(w)=traceQzzz'.t}) along with the fact that
$\tilde{Q}$ is positive semidefinite implies that
(\ref{eq:c.w>=0.t}) holds. This completes the proof.

\end{document}